\let\@@pmod\mod
\DeclareRobustCommand{\mod}{\@ifstar\@pmods\@@pmod}
\def\@pmods#1{\mkern4mu({\operator@font mod}\mkern 6mu#1)}
\definecolor{blue}{rgb}{0,0,1}
\definecolor{red}{rgb}{1,0,0}
\definecolor{green}{rgb}{0,.6,.2}
\definecolor{purple}{rgb}{1,0,1}
\long\def\red#1\endred{\textcolor{red}{#1}}
\long\def\blue#1\endblue{\textcolor{blue}{#1}}
\long\def\purple#1\endpurple{\textcolor{purple}{ #1}}
\long\def\green#1\endgreen{\textcolor{green}{#1}}
\newcommand{\ph}{\varphi}
\newcommand{\g}{\gamma}
\newcommand{\scrL}{\mathcal{L}}
\newcommand{\scrF}{\mathcal{F}}
\newcommand{\Z}{\mathbb{Z}}
\newcommand{\N}{\mathbb{N}}
\newcommand{\R}{\mathbb{R}}
\newcommand{\C}{\mathbb{C}}
\newcommand{\HH}{\mathbb{H}}
\def\Im{\operatorname{Im}}
\def\Re{\operatorname{Re}}
\DeclareMathOperator{\SL}{SL}
\newcommand{\sm}{\left(\begin{smallmatrix}}
\newcommand{\esm}{\end{smallmatrix}\right)}
\newcommand{\bpm}{\begin{pmatrix}}
\newcommand{\ebpm}{\end{pmatrix}}
\newtheorem{theorem}{Theorem}
\newtheorem{lemma}[theorem]{Lemma}
\newtheorem{proposition}[theorem]{Proposition}
\newtheorem{corollary}[theorem]{Corollary}
\newtheorem{definition}[theorem]{Definition}
\theoremstyle{remark}
\newtheorem{remark}[theorem]{Remark}
\newtheorem{rmk}{Remark}
\numberwithin{theorem}{section}
\numberwithin{equation}{section}
\numberwithin{rmk}{section}
\title{$L$-values of harmonic Maass forms}
\author{Nikolaos Diamantis}
\address{University of Nottingham}
\email{nikolaos.diamantis@nottingham.ac.uk}
\author{Larry Rolen}
\address{Vanderbilt University}
\email{larry.rolen@vanderbilt.edu}
\begin{document}

\maketitle

\begin{abstract}
    Bruinier, Funke, and Imamoglu have proved a formula for what can philosophically be called the ``central $L$-value'' of the modular $j$-invariant. Previously, this had been heuristically suggested by Zagier. Here, we interpret this ``$L$-value" as the value of an actual $L$-series, and extend it to all integral arguments and to a large class of harmonic Maass forms which includes all weakly holomorphic cusp forms. The context and relation to previously defined $L$-series for weakly holomorphic and harmonic Maass forms are discussed. These formulas suggest possible arithmetic or geometric meaning of $L$-values in these situations. The key ingredient of the proof is to apply a  recent theory of Lee, Raji, and the authors to describe harmonic Maass $L$-functions using test functions.
\end{abstract}

\section{Introduction}\label{intro}
Some of the most important quantities associated to a modular form are its Fourier coefficients and its $L$-values. Their Fourier coefficients give important sequences in number theory, combinatorics, and physics, whereas $L$-values often encode deep geometric and arithmetic information which is the subject of conjectures such as those by Birch--Swinnerton-Dyer and by Beilinson.

Particularly rich insight can be derived when $L$-values appear as Fourier coefficients of other modular forms (e.g. in Shimura correspondence). A situation of similar nature, in the setting of harmonic Maass forms, occurred in \cite{BFI}, where
a harmonic Maass form was constructed based on modular traces and cycle integrals of weakly holomorphic modular forms.
There, some of the Fourier coefficients of the constructed harmonic Maass form were expressed as the ``central value" of an undefined ``$L$-function" of a weakly holomorphic form. That expression allowed for a geometric interpretation of those coefficients.

We recall the expression established in \cite{BFI}, maintaining their notation.  We let
$C_Q$ denote the imaginary axis and let $J(z):=j(z)-744$ be the Hauptmodul for $\operatorname{SL}_2(\mathbb Z)$. By analogy with the Mellin transform expression for the value of a weight $k$ cusp form at $k/2$, the authors of \cite{BFI} define the
``central $L$-value" of $J$ as the regularized integral
\begin{equation}\label{reg}
\int_{C_Q}^{\mathrm{reg}}J(\tau)\frac{d\tau}{\tau}:=
2 \sum_{n \neq 0} a(n) \mathcal {EI}(2 \pi n)
\end{equation}
where $\mathcal {EI}$ is the special function
\begin{equation}\label{EJInt}
\mathcal{EI}(w):=\int_w^{\infty}e^{-t}\frac{dt}{t}=\begin{cases}
E_1(w) \qquad \text{if $w>0$}
\\
-\text{Ei}(-w) \quad \text{if $w<0$}.
\end{cases}
\end{equation}
Here $E_1$ stands for the exponential integral  (see \eqref{E-G} and \eqref{EpAnCont} below for the general definition of $E_s(z)$) and Ei for the ``complementary" exponential integral defined as the Cauchy principal value of the integral (see \S 6.2(i) of \cite{NIST}).
Then, Th. 1.1 of \cite{BFI} implies
\begin{equation}\label{ZagInt}
\int_{C_Q}^{\mathrm{reg}}J(\tau)\frac{d\tau}{\tau}=-\int_i^{i+1}J(\tau)\left
(\psi(\tau)+\psi(1-\tau) \right ) d\tau
=-2\mathrm{Re}\left(\int_i^{i+1}J(\tau)\psi(\tau)d\tau\right)
,
\end{equation}
where $\psi(z)$ is the Euler Digamma function $\psi(z):=\Gamma'(z)/\Gamma(z).$
That expression generalized a formula suggested to the authors of \cite{BFI} by D. Zagier.
More recently, a similar formula was proved for general harmonic Maass forms in \cite{AS} in the context of their work
on a Shintani correspondence of harmonic Maass forms. In that work too, the crucial element was an explicit characterization of the ``central value" of a hypothetical ``$L$-function" attached to a harmonic Maass form.

As pointed out in \cite{BFI} and \cite{AS}, the reason that the above explicit formulas could be considered as $L$-values only by analogy is that, at the time, there was no systematic construction of $L$-series for harmonic Maass forms. Recently, however, the authors jointly with M. Lee and W. Raji have defined and studied actual $L$-series for general harmonic Maass forms (\cite{DLRR}). In this paper, we will use the theory of \cite{DLRR} to interpret the expressions established in \cite{BFI} and \cite{AS} in the framework of a properly defined $L$-series. We will further extend those explicit expressions beyond the
``central value" to include, in particular, all integer points.

The main results in their full generality will be given in Sections~\ref{mainS} and \ref{appl}. Here, we offer a special case for weakly holomorphic modular forms. To this end, let
\begin{equation}\label{FourInt}
f(z) = \sum_{\substack{n=-n_0 \\  n \ne 0}}^\infty a_f(n) e^{2\pi inz}
\end{equation} be an element of $S_k^!(N)$, the space of weight $k$ weakly holomorphic cusp forms, i.e.,  the space of weakly holomorphic modular forms with vanishing constant terms at the cusps. For each $s, w \in \C$, let
$$\varphi_s^w(t):=\mathbf 1_{[1, \infty)}(t) e^{-wt}t^{s-1}, \qquad \text{for $t>0$.}$$
where $\mathbf 1_S$ denotes the characteristic function of $S \subset \mathbb R$.
Then we set
\begin{equation} \label{LfunInt}
L_f(\varphi_s^w):=\sum_{\substack{n=-n_0 \\  n \ne 0}}^\infty a_f(n) (\mathcal L \varphi_s^w)(2 \pi n)
\end{equation}
where $\mathcal L(g)$ denotes the standard Laplace transform of $g$ (see \eqref{e:Laplace_trans} below). With this definition,
we have the following result, where $$\zeta(s, a, z):=\sum_{m=0}^{\infty} e^{2 \pi i m a}(z+m)^{-s}$$
is the Lerch zeta function.
\begin{theorem}\label{maincor} Let $f \in S^!_k(N)$. Then, for each $s \in \mathbb R$ and each $w$ with $\Im(w)>0$ 
we have
\begin{equation*}
L_f(\varphi_s^w)=
i^{-s}\int_{i}^{i+1} f(z)e^{i w z} \zeta \left (1-s, \frac{w}{2\pi}, z \right ) dz.
\end{equation*}
\end{theorem}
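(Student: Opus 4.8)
The plan is to recognize the defining series as a single contour integral of $f$ and then to ``fold'' that integral into the segment $[i,i+1]$ using the periodicity of $f$, which is exactly the operation that produces the Lerch zeta factor. Because the principal part of $f$ forces some of the relevant integrals to diverge for arbitrary $w$, I would carry this out first on the subregion where $\Re(w)$ is large and then extend to all $w$ with $\Im(w)>0$ by analytic continuation.

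First I would unwind the Laplace transform. Since $(\mathcal{L}\varphi_s^w)(x)=\int_1^\infty t^{s-1}e^{-(w+x)t}\,dt$ whenever $\Re(w+x)>0$, for $\Re(w)>2\pi n_0$ every term of the series converges absolutely, and the sub-exponential growth of the $a_f(n)$ lets me interchange summation and integration:
\[
L_f(\varphi_s^w)=\int_1^\infty t^{s-1}e^{-wt}\Big(\sum_n a_f(n)e^{-2\pi n t}\Big)\,dt=\int_1^\infty t^{s-1}e^{-wt}f(it)\,dt.
\]
Substituting $u=it$ rewrites this as $i^{-s}\int_i^{i\infty}u^{s-1}e^{iwu}f(u)\,du$ taken up the imaginary axis. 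I would then rotate this vertical ray onto the horizontal ray $\{i+x:x\ge 0\}$ by Cauchy's theorem in the closed quarter-plane $\{\Re u\ge 0,\ \Im u\ge 1\}$, on which $u^{s-1}e^{iwu}f(u)$ is holomorphic; the arc at infinity is killed by the decay $|e^{iwu}|=e^{-\Im(w)\Re u-\Re(w)\Im u}$, which dominates the growth $|f(u)|\ll e^{2\pi n_0\Im u}$ of the principal part precisely when $\Re(w)>2\pi n_0$. Finally, writing the horizontal ray as $\sum_{m\ge 0}\int_{i+m}^{i+m+1}$, substituting $u=z+m$, and using $f(z+m)=f(z)$ yields
\[
i^{-s}\int_i^{i+1}f(z)\Big(\sum_{m\ge 0}e^{iw(z+m)}(z+m)^{s-1}\Big)\,dz=i^{-s}\int_i^{i+1}f(z)e^{iwz}\zeta\!\Big(1-s,\tfrac{w}{2\pi},z\Big)\,dz,
\]
since $\sum_{m\ge 0}e^{iwm}(z+m)^{s-1}=\zeta(1-s,\tfrac{w}{2\pi},z)$ by definition. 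This proves the identity on $\{\Re(w)>2\pi n_0\}$.

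To remove the restriction on $\Re(w)$ I would show that both sides are holomorphic in $w$ throughout the connected open set $\{\Im(w)>0\}$ and invoke the identity theorem. For the right-hand side the integrand is holomorphic in $w$ for each fixed $z\in[i,i+1]$ (the Lerch series converges locally uniformly because $|e^{iwm}|=e^{-m\Im(w)}$), so differentiation under the integral over the compact interval is legitimate. For the left-hand side I would use the closed form $(\mathcal{L}\varphi_s^w)(2\pi n)=(w+2\pi n)^{-s}\Gamma(s,w+2\pi n)$, which is holomorphic in $w$ on $\{\Im(w)>0\}$ since $w+2\pi n$ never meets the cut $(-\infty,0]$; the finitely many $n<0$ terms are then harmless, while the asymptotic $\Gamma(s,X)\sim X^{s-1}e^{-X}$ combined with the sub-exponential bound on $a_f(n)$ shows that the tail converges locally uniformly, making $L_f(\varphi_s^w)$ holomorphic there as well.

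I expect the main obstacle to be exactly this passage to general $w$. The integral manipulations are honest only when $\Re(w)$ is large, because the principal-part coefficients $a_f(-n_0),\dots,a_f(-1)$ cause both the Laplace integrals and the arc estimate to diverge otherwise. The genuine work is therefore to verify that the \emph{defining} series is holomorphic on all of $\{\Im(w)>0\}$, i.e. to control the incomplete-gamma tail uniformly on compacta, so that analytic continuation is licensed; by contrast the vanishing of the arc and the folding into the Lerch zeta are routine once $\Re(w)$ is large.
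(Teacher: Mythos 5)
Your proposal is correct and takes essentially the same route as the paper: your rotation of $\int_i^{i\infty}f(u)e^{iwu}u^{s-1}\,du$ onto the horizontal ray $\{i+x:x\ge 0\}$ is precisely the content of the paper's Lemma \ref{bend} (the paper applies it term by term to each exponential and then interchanges sum and integral, whereas you assemble $f$ first and rotate once, an immaterial reordering). Your folding into the Lerch zeta function and the extension to all $\Im(w)>0$ via locally uniform convergence of $\sum_n a_f(n)E_{1-s}(2\pi n+w)=\sum_n a_f(n)(2\pi n+w)^{-s}\Gamma(s,2\pi n+w)$ likewise match the paper's \eqref{lerch} and \eqref{expint} steps.
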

As applications, we show Zagier-like formulas for forms of all weights $k$ and integer $L$-values $s$ which generalize \eqref{ZagInt} of \cite{BFI}. To describe this, for $\varphi_s^w$ as above, set
\begin{equation}\label{L(f}
L^*(f, s):=L_f(\varphi_s^0)= \sum_{\substack{n=-n_0 \\  n \ne 0}}^\infty a_f(n) E_{1-s}(2 \pi n),
\end{equation}
(the star superscript added to indicate the analogy with the ``completed" version of the classical $L$-series, rather than with the $L$-series itself).
Then, in Section \ref{appl} we will prove the following formula, where $\zeta^*(a, z)$ denotes the constant term in the Laurent expansion at $s=a$ of the Hurwitz zeta function
$ \zeta(s, z)=\zeta(s, 0, z)$, i.e. $\zeta^*(a, z)$ equals $\zeta(a, z)$, if $a \neq 1$, and
$-\psi(z)$, when $a=1$.
\begin{theorem}
\label{mainInt*} Let $f\in S_k^!(N)$. Then for each $m \in \mathbb Z$, we have
$$L^*(f, m)=i^{-m}\int_{i}^{i+1} f(z) \zeta^* \left (1-m, z\right )dz.$$
\end{theorem}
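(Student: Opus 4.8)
The plan is to reach Theorem~\ref{mainInt*} in two stages. First I would establish, for every non-integer $s$, the clean identity
$$L^*(f,s)=i^{-s}\int_i^{i+1}f(z)\,\zeta(1-s,z)\,dz,$$
by letting $w\to0$ in Theorem~\ref{maincor}; then I would continue in $s$ to the integers, where the definition of $\zeta^*$ as a Laurent coefficient does the rest. For the first stage, fix $s\notin\Z$ and let $w\to0$ with $\Im(w)>0$. On the left,
$$(\mathcal L\varphi_s^w)(2\pi n)=\int_1^\infty e^{-(2\pi n+w)t}t^{s-1}\,dt=E_{1-s}(2\pi n+w)\longrightarrow E_{1-s}(2\pi n),$$
and, since the defining series is a finite sum over $n<0$ (each term continuous in $w$) plus a tail over $n>0$ dominated uniformly near $w=0$ by a convergent series, we get $L_f(\varphi_s^w)\to L^*(f,s)$.

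For the right-hand side the engine is the asymptotic expansion of the Lerch zeta as its middle argument degenerates. Writing $q=e^{iw}=e^{-\lambda}$ with $\lambda=-iw$, a Mellin--Barnes argument (shifting a contour past the poles of $\Gamma$ and of the Hurwitz zeta) gives
$$\zeta\!\left(1-s,\tfrac{w}{2\pi},z\right)=e^{z\lambda}\Big[\Gamma(s)\lambda^{-s}+\sum_{k\ge0}\frac{(-\lambda)^k}{k!}\zeta(1-s-k,z)\Big].$$
The decisive observation is that the factor $e^{iwz}=e^{-z\lambda}$ appearing in Theorem~\ref{maincor} cancels the prefactor $e^{z\lambda}$ exactly, leaving
$$e^{iwz}\,\zeta\!\left(1-s,\tfrac{w}{2\pi},z\right)=\Gamma(s)(-iw)^{-s}+\sum_{k\ge0}\frac{(iw)^k}{k!}\zeta(1-s-k,z),$$
whose only possibly singular term as $w\to0$, namely $\Gamma(s)(-iw)^{-s}$, is constant in $z$.

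This is exactly where the cusp hypothesis is used. Since $f\in S_k^!(N)$ has vanishing constant term, $\int_i^{i+1}f(z)\,dz=a_f(0)=0$, so integrating the last display against $f$ annihilates $\Gamma(s)(-iw)^{-s}$ outright; as every $k\ge1$ summand carries a positive power of $w$, letting $w\to0$ retains only the $k=0$ term and proves the displayed identity for all non-integer $s$. Now I would let $s\to m\in\Z$. The left side is continuous in $s$ (it is $\sum a_f(n)E_{1-s}(2\pi n)$, a finite sum plus a uniformly convergent tail), so it tends to $L^*(f,m)$. For $m\neq0$ one has $1-m\neq1$, the Hurwitz zeta $\zeta(1-s,z)$ is regular at $s=m$, and $\zeta(1-m,z)=\zeta^*(1-m,z)$, giving the claim at once.

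The hard part will be the resonant value $m=0$. There the Hurwitz zeta has a pole, $\zeta(1-s,z)=-\tfrac1s-\psi(z)+O(s)$ as $s\to0$, so the right-hand side appears to diverge. But the singular part is once more killed by the cusp condition:
$$i^{-s}\int_i^{i+1}f(z)\,\zeta(1-s,z)\,dz=-\tfrac1s\underbrace{\int_i^{i+1}f(z)\,dz}_{=0}-\int_i^{i+1}f(z)\psi(z)\,dz+O(s),$$
so the limit equals $-\int_i^{i+1}f(z)\psi(z)\,dz=\int_i^{i+1}f(z)\zeta^*(1,z)\,dz$, precisely the $m=0$ case since $\zeta^*(1,z)=-\psi(z)$ and $i^0=1$. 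The remaining labor is to make the Lerch expansion and the several interchanges of limit, sum, and integral rigorous, and to confirm that the two uses of $a_f(0)=0$—once to remove $\Gamma(s)(-iw)^{-s}$ in the $w$-limit and once to remove the $\tfrac1s$ pole in the $s$-limit—are the only places where the cusp condition is genuinely needed.
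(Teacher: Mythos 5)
Your proposal is correct in substance, but it follows a genuinely different route from the paper, so it is worth recording the comparison. The paper proves Theorem~\ref{mainInt*} by splitting into cases and combining four separate results, each obtained by letting $w=ix\to 0^+$ in Theorem~\ref{maincor} and analyzing the degeneration of the Lerch zeta at that particular integer: Proposition~\ref{Zag} for $m=0$ (subtracting $\zeta(1,\tfrac{ix}{2\pi},1)$ by hand and recognizing the digamma series), Corollary~\ref{polyl} for $m=1$ (via the $z$-deformed polylogarithm), Corollary~\ref{bernWHF} for $m\ge 2$ (via Lemmas~\ref{indep} and~\ref{b_0}, which expand $\phi(-m,y,z)$ in powers of $1-y$ and identify the constant term with Bernoulli polynomials through Stirling-number identities), and Corollary~\ref{hurw} for $m<0$ (direct uniform convergence of the Lerch series to the Hurwitz zeta). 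You instead prove a single identity at every non-integer $s$, using the classical expansion
\begin{equation*}
e^{iwz}\,\zeta\Bigl(1-s,\tfrac{w}{2\pi},z\Bigr)=\Gamma(s)(-iw)^{-s}+\sum_{k\ge 0}\frac{(iw)^k}{k!}\,\zeta(1-s-k,z),
\end{equation*}
whose only singular term as $w\to 0$ is independent of $z$ and hence annihilated by $\int_i^{i+1}f(z)\,dz=0$; you then pass to integer $m$ by continuity in $s$, the pole of $\zeta(1-s,z)$ at $s=0$ being killed by the cusp condition a second time. The underlying mechanism is the same as the paper's (every singular term is constant in $z$, hence dies against a cusp form), but your organization replaces the paper's integer-by-integer Bernoulli/Stirling/polylogarithm computations by one classical expansion plus a continuity argument, and as a by-product it yields $L^*(f,s)=i^{-s}\int_i^{i+1}f(z)\zeta(1-s,z)\,dz$ at \emph{all} non-integer real $s$, whereas the paper records this only for $s<0$ (Corollary~\ref{hurw}). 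What the paper's heavier route buys is Theorem~\ref{bern}: for genuinely non-holomorphic $f\in HC_k(N)$ the extra term $R(w,s)$ must be evaluated explicitly, and there the Bernoulli-polynomial machinery is needed; your shortcut covers only $f\in S_k^!(N)$, which is exactly the scope of Theorem~\ref{mainInt*}. The labor you flag at the end is real but standard, and two points deserve explicit care: first, the expansion above is classically stated for real middle argument and real $z\in(0,1]$, so you must continue it analytically in $z$ to the segment $[i,i+1]$ (or, more simply, truncate the Mellin--Barnes contour shift to get only the polar term, the $k=0$ term, and an $O(|w|)$ remainder uniform in $z$, which is all your argument uses); second, the convergence $L_f(\varphi_s^w)\to L^*(f,s)$ for the finitely many $n<0$ terms uses that $E_{1-s}$ on the negative reals is defined as the continuous extension from the upper half-plane, i.e.\ precisely the branch the paper fixes after \eqref{EpAnCont}.
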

\begin{rmk} As explained in Remark \ref{cconj} below, since $\zeta^*(1, a)=-\psi(a)$,
Theorem~\ref{mainInt*} directly implies \eqref{ZagInt}.
\end{rmk}
The geometric interpretation of the expression \eqref{ZagInt} established in \cite{BFI}, combined with the systematic approach to $L$-series of harmonic Maass forms of \cite{DLRR} that we apply here, suggests a deeper arithmetic meaning of the $L$-values considered which is worth studying further.

\section{Context and motivation}
As mentioned above, the Fourier coefficients and $L$-values of classical holomorphic modular forms are fundamental invariants.
For concreteness, let $f(z)$ be a cusp form of weight $k$, having Fourier expansion $f(z)=\sum_{n\geq1}a_f(n)q^n$ with $q:=e^{2\pi i z}$. Then the $L$-series for $f$ is defined for $\Re (s)>k+1/2$ by
\[
L(f,s):=\sum_{n\geq1}\frac{a_f(n)}{n^s}.
\]
This has a functional equation relating the values at $s$ and $k-s$, and an analytic continuation to $\mathbb C$. The critical $L$-values may also be given by the following integral expressions:
\begin{equation}\label{Lintegral}
L(f,j+1)=\frac{(2\pi)^{j+1}}{j!}\int_0^{\infty}f(it)t^jdt,\quad\quad (1\leq j\leq k-1).
\end{equation}
These integrals are often called the {\it periods} of $f$. While there is a vast literature on the periods of holomorphic modular forms, little is known about non-holomorphic extensions. However, there is growing evidence that $L$-series in such cases arise in important connections with physics, and that an arithmetic theory may be lurking. In particular, modular forms which have exponential growth near the cusps play an increasingly important role in applications. Often, physical theories require one to allow such singularities. In this situation, integrals such as the right hand side of \eqref{Lintegral} no longer make sense. The concept of  {\it renormalization} in physics requires mollifying divergent integrals
(\cite{HarveyMoore}) and it was a key ingredient in  Borcherds' famous paper on automorphic forms with singularities on Grassmannians \cite{Borcherds}. Borcherds' work
was pivotal for the field of {\it harmonic Maass forms}, including Bruinier and Ono's construction of generalized Borcherds products \cite{BruinierOno}. This has fundamental applications, for example 
the construction of harmonic Maass forms which interpolate central critical $L$-values and $L$-derivatives of elliptic curves.

Our work aims to begin a more systematic study of modular $L$-values when there is exponential growth at the cusps.
 Recently,  Lee, Raji, and the authors developed \cite{DLRR} a new theory of  $L$-series in such cases.
 The key idea was to use a distributional definition, rather than think of $L$-series as functions on $\C$.
This distributional definition allowed the development of structures that are critically important for holomorphic modular forms, e.g. functional equations, converse theorems, Voronoi-type summation formulas. Here, we apply the approach of \cite{DLRR} to study periods.
 
 Several early hints on the structure of periods of weakly holomorphic modular forms can be found in the literature.
 Much has been written about the traces of singular moduli for weakly holomorphic forms, initiated by Zagier \cite{ZagierTraces} and related to the work of Borcherds \cite{BorcherdsInfProd}. For example, Zagier showed that the generating function for the traces $tr_d(J)$ of $J(z)$ at discriminant $d<0$ CM points is the weakly holomorphic modular form
 \[
 -q^{-1}+2-248q^3+492q^4+\ldots\in M^!_{\frac32}(\Gamma_0(4)).
 \]
 The analogous situation of {\it positive} discriminants gives rise to the theory of {\it cycle integrals}, which Kohnen \cite{Kohnen} and Kohnen-Zagier \cite{KohnenZagier} showed to be closely tied to
 certain period integrals of holomorphic modular forms.
 An important work of Duke, Imamoglu, and T\'oth \cite{DIT} studied the cycle integrals of $J(z)$. More recently, the ``square discriminant'' cases were studied by Bruinier, Funke, and Imamoglu \cite{BFI}. In the case of $J(z)$, this allowed them to prove a formula, discovered heuristically by Zagier. This gave a cycle integral of $J(z)$ in discriminant $1$, which philosophically can be thought of as a critical $L$-value of $J(z)$:
\[
``L(J,0)"\ =\ 2\int_1^{\infty}J(z)\frac{dz}{z}=2\sum_{n\neq0}a_J(n)\int_{2\pi n}^{\infty}e^{-t}\frac{dt}{t}=-2\Re\left(\int_i^{i+1}J(z)\psi(z)dz\right).
\]

This work recasts the above formula in terms of genuine $L$-series associated with weakly holomorphic forms according to the theory developed in \cite{DLRR}. As outlined in Sect. \ref{intro}, this leads to an extension to all integer (and, in some cases, non-integer) values, and, further, to general harmonic Maass forms, thereby re-interpreting an analogous formula of \cite{AS}.

Our approach also sheds light to an earlier version of an $L$-series of weakly holomorphic modular forms introduced in \cite{BFK} and applied to various settings (e.g. \cite{BDE, DD}). That version, for a weight $k$ holomorphic cusp form $f$ with Fourier expansion \eqref{FourInt},
was given by
\begin{equation}\label{LfInt}
\widetilde L_f(s):= \sum_{\substack{n=-n_0 \\  n \ne 0}}^\infty a_f(n) E_{1-s}(2 \pi n)+
i^k \sum_{\substack{n=-n_0 \\  n \ne 0}}^\infty a_f(n) E_{1-k+s}(2 \pi n).
\end{equation}
 We observe that $\widetilde L_f(s)$, when $k=s=0,$ is similar to \eqref{reg} except that the former has been symmetrized to ensure it satisfies a functional equation $s \to k-s$. Indeed, the first series in the RHS of \eqref{LfInt}, i.e., the function $L^*(f, s)$ of Th. \ref{mainInt*}, does not satisfy a functional equation. Instead, the behavior of this series (and, thus, of \eqref{reg}) under the transformation $s \to k-s$ is fully characterized once it is incorporated into the framework of the $L$-series defined by \eqref{LfunInt}. Interpreted in this way, it does satisfy a functional equation which is stated and proved in Prop. \ref{FE} below. For this reason, we consider as our principal object the $L$-series $L_f(\varphi_s^w)$ and not $L^*(f, s)$. The latter is mainly introduced to interpret our results in the setting of \cite{BFI} and \cite{AS} which were the inspiration of our work.

Another advantage of our method as applied here is that it formalizes the regularization processes employed in both \cite{BFI} and \cite{BFK} (and \cite{BDE}) thus benefiting from the insight gained by their use. From this standpoint, regularization amounts to evaluating $L$-series at \emph{families of mappings} instead of specific values, which, as seen in the present note, allows for greater flexibility and for understanding the behavior under the transformation $s \to k-s.$

The structure of the note is as follows. In the next section, we review the theory of $L$-series associated with harmonic Maass cusp forms as introduced in \cite{DLRR}. The main theorem (Th. \ref{main}) is formulated and proved in Section \ref{mainS}. In Section \ref{appl}, we apply Th. \ref{main} to retrieve \eqref{ZagInt} of \cite{BFI} and extend it to other integer values. However, from our perspective, the ``correct" viewpoint is to consider Th. \ref{main} as the actual extension of \eqref{ZagInt} of \cite{BFI}. The results of Sect. \ref{appl} may appear more similar in form to \eqref{ZagInt}, but they provide only part of the information contained in Th. \ref{main}. Finally, in Section \ref{comp}, we extend \eqref{ZagInt} of \cite{BFI} in a different direction, by evaluating our $L$-series at compactly supported test functions and deriving a formula analogous to \eqref{ZagInt}.

\section*{Acknowledgments} We thank Kathrin Bringmann and Jorma Louko for very useful comments and feedback on the manuscript.
The first author is partially supported by EPSRC grant EP/S032460/1. The second author is grateful for support from a grant from the Simons Foundation (853830, LR), support from a Dean’s Faculty Fellowship from Vanderbilt University, and to the Max Planck Institute for Mathematics in Bonn for its hospitality and financial support.

\section{Harmonic Maass cusp forms and their $L$-series}\label{Prelim}
We recall the basic definitions and properties of harmonic Maass modular forms. For simplicity, we will focus only on cuspidal forms in this paper. These basic facts can all be found in Bruinier-Funke's work \cite{BF}; see also \cite{HMFBook} for a general reference on harmonic Maass forms.

Let $\HH$ denote the complex upper half-plane and set
$$\mathbb{H}^+:=\{w \in \mathbb H; \Re(w) > 0\} \qquad \text{and} \, \, \,  \mathbb{H}^+_0:=\{w \in \mathbb H; \Re(w) \ge 0\}.$$
For $k \in \Z$, we consider the action $|_k$ of $\SL_2(\R)$ on smooth functions $f\colon \HH \to \C$, given by
\begin{equation}
(f|_k\gamma)(z):= j(\gamma, z)^{-k} f(\gamma z), \qquad \text{for $\gamma=\bpm a & b \\  c & d \ebpm \in$ SL$_2(\R)$}.
\end{equation}
Here $\gamma z = \frac{az+b}{cz+d}$ is the M\"obius transformation
and $j(\gamma, z):=cz+d.$
We also let $\Delta_k$ denote the weight $k$ hyperbolic Laplacian on $\HH$ given by
\begin{equation}
\Delta_k:=-4y^2 \frac{\partial}{\partial z} \frac{\partial}{\partial \bar z}+2iky\frac{\partial}{\partial \bar z},
\end{equation}
where $z=x+iy$ with $x,y\in\R$.

With this notation we state the following.
\begin{definition}\label{hmf} Let $N \in \N$. A \emph{harmonic Maass form of weight $k$ for $\Gamma_0(N)$} is a smooth function $f\colon \HH \to \C$
such that:
\begin{enumerate}
\item[i).] For all $\gamma \in \Gamma_0(N),$ we have
$
f|_k \gamma=f.
$
\item[ii).] We have
$
\Delta_k(f)=0.
$
\item[iii).] For each $\gamma=\sm * & * \\ c & d \esm \in \SL_2(\Z)$, there is a polynomial $P(z) \in \C[q^{-1}],$ such that
\begin{equation*}
f(\g z)(cz+d)^{-k}-P(z)=O(e^{-\varepsilon y}), \qquad \text{as $y \to \infty$, for some $\varepsilon>0.$}
\end{equation*}
\end{enumerate}
We let $H_k(N)$ denote the space of harmonic Maass forms with weight $k$ for $\Gamma_0(N)$.
\end{definition}
To describe the Fourier expansions of the elements of $H_k(N)$, we recall the definition and the asymptotic behavior of the incomplete Gamma function.
For $r, z\in \C$ with $\Re(r)>0$, we define the incomplete Gamma function as
\begin{equation}
\Gamma(r,z) := \int_{z}^\infty e^{-t}t^{r}\, \frac{dt}{t}.
\end{equation}
When $z\neq 0$, $\Gamma(r, z)$ is an entire function of $r$ (see \cite{NIST} \S 8.2(ii)).
We note the asymptotic relation for $x \in \R$ (see (8.11.2) of \cite{NIST}):
\begin{equation}\label{asym}
\Gamma(s, x) \sim x^{s-1}e^{-x} \qquad \text{as $|x| \to \infty.$ }
\end{equation}

With this notation we can state the following theorem.
 \begin{theorem}[\cite{BF}] Let $k \in \Z.$ For each $f \in H_k(N)$, we have
 \begin{equation}\label{FourEx0}
f(z) = \sum_{\substack{n \ge -n_0}} a_f(n) e^{2\pi inz}+ \sum_{\substack{n < 0}} b_f(n) \Gamma(1-k, -4 \pi n y)e^{2 \pi i nz}
\end{equation}
for some $a_f(n), b_f(n) \in \C$ and $n_0 \in \mathbb N.$
Analogous expansions hold at the other cusps.
 \end{theorem}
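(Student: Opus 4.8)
The plan is to read off the Fourier expansion directly from the three defining properties of Definition~\ref{hmf}, by turning the harmonicity equation into an ordinary differential equation in the variable $y$ for each Fourier mode and then solving that ODE explicitly. First I would use the invariance~(i). Since $\sm 1 & 1 \\ 0 & 1 \esm\in\Gamma_0(N)$, property~(i) gives $f(z+1)=f(z)$, so $f$ is periodic in $x=\Re(z)$ and admits a Fourier expansion
\[
f(z)=\sum_{n\in\Z}c_n(y)\,e^{2\pi i n x},\qquad c_n(y)=\int_0^1 f(x+iy)\,e^{-2\pi i n x}\,dx,
\]
with term-by-term differentiation justified by the smoothness of $f$. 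Feeding this into property~(ii), a direct computation using $\partial_z\partial_{\bar z}=\tfrac14(\partial_x^2+\partial_y^2)$ and $\partial_{\bar z}=\tfrac12(\partial_x+i\partial_y)$ shows that $\Delta_k\big(c_n(y)e^{2\pi i n x}\big)=0$ is equivalent to the second-order linear ODE
\[
y^2 c_n''+k y\,c_n'-\big(4\pi^2 n^2 y^2-2\pi k n y\big)c_n=0.
\]

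The heart of the argument is solving this ODE and recognising the incomplete Gamma function as the second solution. For $n\neq 0$ I would verify directly that $c_n(y)=e^{-2\pi n y}$, which produces the holomorphic term $e^{2\pi i n z}$, is one solution, and that $c_n(y)=\Gamma(1-k,-4\pi n y)\,e^{-2\pi n y}$ is a second, linearly independent solution. The check for the latter uses $\tfrac{d}{dz}\Gamma(r,z)=-e^{-z}z^{r-1}$ and, after substituting $c_n=G(y)e^{-2\pi n y}$, collapses to the identity $y G''+(k-4\pi n y)G'=0$, which $G(y)=\Gamma(1-k,-4\pi n y)$ satisfies. Since the solution space is two-dimensional, this gives $c_n(y)=a_f(n)e^{-2\pi n y}+b_f(n)\Gamma(1-k,-4\pi n y)e^{-2\pi n y}$. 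For $n=0$ the ODE degenerates to $y^2c_0''+ky c_0'=0$, whose solutions are spanned by the constants and $y^{1-k}$ (or $\log y$ when $k=1$).

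Finally I would impose the growth condition~(iii) at the cusp $\infty$ to eliminate the unwanted solutions, using the asymptotic~\eqref{asym}. For $n=0$ the extra solution $y^{1-k}$ (or $\log y$) decays only polynomially, hence cannot be absorbed into the principal part $P\in\C[q^{-1}]$ up to $O(e^{-\varepsilon y})$, so it must vanish, leaving the constant $a_f(0)$. For $n>0$, the argument $-4\pi n y\to-\infty$ forces $\Gamma(1-k,-4\pi n y)e^{-2\pi n y}$ to grow like $e^{2\pi n y}$ by~\eqref{asym}, so condition~(iii) gives $b_f(n)=0$; for $n<0$ the same term decays like $(4\pi|n|y)^{-k}e^{-2\pi|n|y}$ and is harmless, while the holomorphic terms $a_f(n)e^{2\pi i n z}$ with $n<0$ grow exponentially and, being precisely the monomials allowed in $P$, may occur for only finitely many $n$, i.e.\ $n\ge -n_0$. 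Assembling the surviving modes yields~\eqref{FourEx0}, and the expansion at a cusp $\cuspa$ follows by applying the identical argument to $f|_k\sigma_{\cuspa}$, where $\sigma_{\cuspa}\in\SL_2(\R)$ is a scaling matrix sending $\infty$ to $\cuspa$; this function is again periodic (with the appropriate width) and harmonic, since $\Delta_k$ commutes with the $|_k$-action. The main obstacle is the explicit ODE solution: correctly identifying $\Gamma(1-k,-4\pi n y)$ as the second solution and then tracking the sign of $n$ through the asymptotics~\eqref{asym} so that condition~(iii) partitions the coefficients into exactly the stated ranges.
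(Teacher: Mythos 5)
Your proposal is correct, and since the paper does not prove this statement itself (it is quoted from Bruinier--Funke \cite{BF}), there is no internal proof to compare against; your separation-of-variables argument --- Fourier expansion in $x$ from the translation in $\Gamma_0(N)$, the second-order ODE $y^2c_n''+kyc_n'-(4\pi^2n^2y^2-2\pi kny)c_n=0$ for each mode with solution basis $e^{-2\pi ny}$ and $\Gamma(1-k,-4\pi ny)e^{-2\pi ny}$, and the growth condition (iii) killing $b_f(n)$ for $n\ge 0$ and truncating $a_f(n)$ at $-n_0$ --- is precisely the standard proof given in the cited source. The only quibble is cosmetic: for $k\le 1$ the extra weight-zero-mode solution $y^{1-k}$ (or $\log y$) grows rather than ``decays only polynomially,'' but either way it cannot be matched by the constant term of $P$ up to $O(e^{-\varepsilon y})$, so your conclusion stands.
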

 \noindent
 The first sum is sometimes called the ``holomorphic part" of $f$ and the second, the ``non-holomorphic part."
 
 The subspace we will be dealing with, for simplicity, in this paper is the space
 $HC_k(N)$ of \emph{harmonic Maass cusp forms of weight $k$ and level $N$}. It consists of $f \in H_k(N)$ which have vanishing constant terms at all cusps.
 Another subspace of particular importance is the space $S_k^!(N)$ of \emph{weakly holomorphic cusp forms with weight $k\in \Z$ and level $N$}. It consists of $f \in HC_k(N)$ that are holomorphic on $\HH.$

The growth of the coefficients $a_f(n), b_f(n)$ of $f$ in \eqref{FourEx0} is given by
\begin{equation} \label{coeffbound}
a_f(n), b_f(-n)=O \left (e^{C_f \sqrt{n}}\right ),
\qquad \text{as $n \to \infty$ for some $C_f>0$}.
\end{equation}

We next recall an important mapping sending forms of weight $k$ to forms of weight $2-k$: Let $f$ be an element of $HC_k$ with Fourier expansion \eqref{FourEx0} and set
$$(\xi_kf)(z):=2iy^k\overline{\frac{\partial f}{\partial \bar z}}.$$ Then, this induces a surjective map
$$\xi_k\colon HC_k(N) \twoheadrightarrow S_{2-k}(N),$$
and the output cusp form has Fourier expansion given explicitly by
\begin{equation}\label{xi}
(\xi_kf)(z)=-\sum_{n<0} (-4 \pi n)^{1-k} \overline{b_f(n)} e^{-2 \pi i n z}.
\end{equation}
An implication of this fact is that, when $k \ge 2$, $\xi_kf$ vanishes (since then $S_{2-k}(N)=\{0\}$) and thus, $f$ is a weakly holomorphic cusp form.

Finally, $f^c$ will denote the harmonic Maass form obtained by taking the complex conjugate of the
coefficients of $f$, i.e.,
$$f^c(z):=\overline{f(-\bar z)}.$$

We will now  recall the $L$-series attached to harmonic Maass cusp forms as defined in \cite{DLRR}.  We require some additional definitions to describe the set-up.
Let $C(\R, \C)$ be the space of piece-wise smooth complex-valued functions on $\R$. Let $\scrL$ be the Laplace transform mapping each $\varphi \in C(\R, \C)$ to
\begin{equation}\label{e:Laplace_trans}
(\scrL \varphi)(s):=\int_0^{\infty} e^{-s t} \varphi(t)dt    
\end{equation}
for each $s \in \C$ for which the integral converges absolutely.

For each function $f: \HH \to \C$ given by an absolutely convergent series of the shape
 \begin{equation}\label{FourEx}
f(z) = \sum_{\substack{n \ge -n_0 \\ n \ne 0}} a_f(n) e^{2\pi inz}+ \sum_{\substack{n < 0}} b_f(n) \Gamma(1-k, -4 \pi n y)e^{2 \pi i nz}
\end{equation}
for some $k \in \Z$ and a $n_0 \in \N$, let $\scrF_f$ be the space of functions $\varphi\in C(\R, \C)$ such that the following series converges:  
\begin{equation}\label{Ff}
\sum_{\substack{n \ge -n_0 \\ n \neq 0}} |a_f(n)| (\scrL|\varphi|)\left (2 \pi n\right )
+ \sum_{n<0} |b_f(n)|\left (4\pi |n|\right )^{1-k}\int_0^{\infty}\frac{(\scrL |\varphi_{2-k}|)\left (-2\pi n(2t+1)\right )}{(1+t)^{k}}dt.
\end{equation}

\begin{rmk}
For the functions $f$ we will be considering, the space $\scrF_f$ contains the compactly supported functions.
\end{rmk}

A useful expression for the ``non-holomorphic'' part of the series is (cf. (4.14) of \cite{DLRR}):
\begin{equation}\label{nonhol}
\left (-4\pi n \right )^{1-k}\int_0^{\infty}\frac{\scrL\varphi_{2-k}
 \left (-2\pi n(2t+1)\right )}{(1+t)^k} dt=
 \int_0^{\infty}\Gamma\left(1-k, -4 \pi n y\right)e^{-2 \pi n y}\ph(y)dy.
\end{equation}
With this notation, we are now able to define our $L$-series and recall some basic facts on them, which were given in \cite{DLRR}.
\begin{definition}\label{def:Lf}
Let $f$ be a function on $\HH$ given by
the Fourier expansion \eqref{FourEx}.
The $L$-series of $f$ is defined to be the map
$L_f\colon \mathcal F_f \to \C$ such that, for $\varphi\in \scrF_{f}$,
\begin{equation*}
\begin{aligned}
L_f(\ph)=&\sum_{n \ge -n_0} a_f(n) (\scrL \ph)(2 \pi n)
\\ &+ \sum_{n<0} b_f(n)(-4\pi n)^{1-k}\int_0^{\infty}\frac{(\scrL \varphi_{2-k})(-2\pi n(2t+1))}{(1+t)^k} dt.
\end{aligned}
\end{equation*}
\end{definition}

The analogue of Mellin transform expression for this $L$-series is the following.
\begin{lemma}\label{lem:LfLf'_int}
Let $f$ be a function on $\HH$ as a series in \eqref{FourEx}.
For $\varphi\in \scrF_f$, the $L$-series $L_f(\varphi)$ can be written as
\begin{equation*}
L_f(\varphi)=\int_0^\infty f(iy) \varphi(y) dy.
\end{equation*}
\end{lemma}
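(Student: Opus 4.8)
The plan is to substitute the Fourier expansion \eqref{FourEx} of $f$ directly into the integral $\int_0^\infty f(iy)\varphi(y)\,dy$ and integrate term by term, matching each resulting integral against the corresponding summand in Definition~\ref{def:Lf}. The whole argument hinges on justifying the interchange of summation and integration, so the first step is to set that up carefully. For $z=iy$ we have $e^{2\pi i n z}=e^{-2\pi n y}$ and $-4\pi n y = -4\pi n \cdot \Im(iy)$, so the two pieces of $f(iy)$ are $a_f(n)e^{-2\pi n y}$ and $b_f(n)\Gamma(1-k,-4\pi n y)e^{-2\pi n y}$. The defining condition for $\varphi\in\scrF_f$, namely the convergence of \eqref{Ff}, is precisely the hypothesis that makes the term-by-term integrated series converge absolutely, so by Fubini--Tonelli (or dominated convergence) the interchange is legitimate.

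Second, I would handle the holomorphic part. For each $n$ with $n\ge -n_0$, $n\ne 0$, we compute
\begin{equation*}
\int_0^\infty a_f(n) e^{-2\pi n y}\varphi(y)\,dy = a_f(n)\int_0^\infty e^{-2\pi n y}\varphi(y)\,dy = a_f(n)(\scrL\varphi)(2\pi n),
\end{equation*}
which is exactly the first sum in Definition~\ref{def:Lf}; here I am just unwinding the definition \eqref{e:Laplace_trans} of the Laplace transform with $s=2\pi n$. Note this step is clean even when $2\pi n<0$, since the $\scrL$-integral still converges by the $\scrF_f$ hypothesis.

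Third, for the non-holomorphic part I would invoke the identity \eqref{nonhol} rather than rederiving it. For each $n<0$,
\begin{equation*}
\int_0^\infty b_f(n)\Gamma(1-k,-4\pi n y)e^{-2\pi n y}\varphi(y)\,dy
= b_f(n)(-4\pi n)^{1-k}\int_0^\infty \frac{(\scrL\varphi_{2-k})(-2\pi n(2t+1))}{(1+t)^k}\,dt,
\end{equation*}
which is precisely \eqref{nonhol} with $\varphi$ in place of the generic test function; summing over $n<0$ reproduces the second sum in Definition~\ref{def:Lf}. Combining the two parts gives the claimed equality $L_f(\varphi)=\int_0^\infty f(iy)\varphi(y)\,dy$.

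The main obstacle, and the only genuinely substantive point, is the justification of the interchange of sum and integral. The coefficient growth \eqref{coeffbound} is subexponential ($O(e^{C_f\sqrt{n}})$), while the factors $(\scrL\varphi)(2\pi n)$ and the non-holomorphic integrals decay according to the behavior of $\varphi$ and its Laplace transform; the role of the membership condition $\varphi\in\scrF_f$ is exactly to guarantee that the doubly-indexed series of absolute values \eqref{Ff} converges, so that Tonelli applies to the positive majorant and Fubini then legitimizes the rearrangement. I would therefore state the absolute-convergence estimate first, cite \eqref{Ff} as the hypothesis that controls it, and only then perform the formal term-by-term computation; everything after that is routine once \eqref{nonhol} is taken as given.
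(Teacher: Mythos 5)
Your proposal is correct and is the natural argument: expand $f(iy)$ via \eqref{FourEx}, justify the interchange of sum and integral by Fubini--Tonelli using precisely the convergence of \eqref{Ff} (noting that for $n<0$ the factor $\Gamma(1-k,-4\pi n y)e^{-2\pi n y}$ is positive, so \eqref{nonhol} applied to $|\varphi|$ supplies the required majorant), and then identify the two resulting sums with those in Definition~\ref{def:Lf} via \eqref{e:Laplace_trans} and \eqref{nonhol}. The paper itself states this lemma without proof, recalling it from \cite{DLRR}, and your argument is essentially the same derivation given there.
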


Before stating the functional equation of  $L_f$ for $f \in HC_k(N)$, we introduce the action of a Fricke involution in this setting.
For each $a \in \Z, $ $M \in \mathbb N$ and $\ph\colon \R_{>0} \to \C$, we define
\begin{equation}\label{actionR}
(\ph|_{a}W_M)(x):=(Mx)^{-a} \ph\left(\frac{1}{Mx}\right) \qquad \text{for all $x>0$}.
\end{equation}
\begin{rmk}
The reader should note that there is a change in sign convention from earlier in this paper, when  $W_M$ was considered to act on functions on $\mathbb H$.
\end{rmk}
With this notation in mind, the final result about our $L$-series that we require is the following functional equation, proved, in more generality, in \cite{DLRR} (Th. 4.3).
\begin{theorem}\label{DThalf}
Fix $k\in \Z$ and $N\in \N$.
Suppose that $f$ is an element of $HC_k(N)$ with expansion \eqref{FourEx}.
Consider the map $L_{f}\colon  \scrF_{f}\to \C$ given by Definition~\ref{def:Lf}.
Set
\begin{equation*}
g:=f|_kW_{N}    
\end{equation*}
and
\begin{equation*}
\scrF_{f, g} := \left\{\varphi\in \scrF_{f}
\;:\; \varphi|_{2-k} W_N \in \scrF_{g} \right\}.  
\end{equation*}
Then $\scrF_{f, g}\neq \{0\}$ and, for each $\ph \in \scrF_{f, g}$ we have
\begin{equation*}
L_{f} (\ph) =i^k N^{1-k/2} L_{g}(\ph|_{2-k}W_N),  
\end{equation*}
\end{theorem}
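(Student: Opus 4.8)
The plan is to reduce the functional equation to a single change of variables on the imaginary axis. By Lemma~\ref{lem:LfLf'_int}, which applies to $f$ and to $g=f|_kW_N$ (the latter again lying in $HC_k(N)$ since $W_N$ normalizes $\Gamma_0(N)$, hence again admits an expansion of the shape \eqref{FourEx}), one has the two integral representations
\[
L_f(\ph)=\int_0^\infty f(iy)\,\ph(y)\,dy,\qquad
L_g(\ph|_{2-k}W_N)=\int_0^\infty g(iu)\,(\ph|_{2-k}W_N)(u)\,du,
\]
both absolutely convergent precisely because $\ph\in\scrF_f$ and $\ph|_{2-k}W_N\in\scrF_g$, i.e.\ because $\ph\in\scrF_{f,g}$. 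Thus it suffices to transform the first integral into the second.

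First I would carry out the substitution $y=1/(Nu)$, a smooth orientation-reversing bijection of $(0,\infty)$ with $dy=-N^{-1}u^{-2}\,du$. The two ingredients are the slash relation defining $g$ and the action \eqref{actionR} on test functions. With the normalization implicit in $g=f|_kW_N$, evaluating on the imaginary axis and using $-1/(Niu)=i/(Nu)$ gives
\[
g(iu)=N^{k/2}(Niu)^{-k} f\!\left(\tfrac{i}{Nu}\right),\qquad\text{equivalently}\qquad
f(iy)=f\!\left(\tfrac{i}{Nu}\right)=N^{-k/2}(Niu)^{k}\,g(iu).
\]
For the test function, \eqref{actionR} gives $\ph(y)=\ph(1/(Nu))=(Nu)^{2-k}(\ph|_{2-k}W_N)(u)$. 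Substituting these into $\int_0^\infty f(iy)\ph(y)\,dy$ and collecting the factors of $N$, $i$ and $u$, the powers of $u$ cancel ($u^{k}\cdot u^{2-k}\cdot u^{-2}=1$), the power of $i$ is $i^k$, and the powers of $N$ combine as $-k/2+k+(2-k)-1=1-k/2$. This yields exactly $L_f(\ph)=i^kN^{1-k/2}\int_0^\infty g(iu)(\ph|_{2-k}W_N)(u)\,du=i^kN^{1-k/2}L_g(\ph|_{2-k}W_N)$.

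It remains to see that $\scrF_{f,g}\neq\{0\}$, and this — together with the bookkeeping that convergence is preserved under the substitution — is the only genuinely delicate point. Here I would invoke the remark that $\scrF_f$ and $\scrF_g$ both contain the smooth functions compactly supported in $(0,\infty)$; since $x\mapsto 1/(Nx)$ preserves this class (a compact subset of $(0,\infty)$ maps to a compact subset of $(0,\infty)$, and smoothness and the weight factor $(Nx)^{-(2-k)}$ are harmless away from $0$ and $\infty$), any such $\ph$ satisfies simultaneously $\ph\in\scrF_f$ and $\ph|_{2-k}W_N\in\scrF_g$, so $\scrF_{f,g}$ is nonzero. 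The main obstacle is thus not the algebraic manipulation, which is forced by the two normalizations, but rather verifying that the membership conditions defining $\scrF_f$ and $\scrF_g$ in \eqref{Ff} are compatible under $W_N$; in the general $HC_k(N)$ case this requires controlling the non-holomorphic contribution to \eqref{Ff}, whose behavior under the substitution is less transparent than that of the holomorphic part. For $f\in S_k^!(N)$ the non-holomorphic terms are absent and the argument is immediate.
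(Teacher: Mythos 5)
Your proof is correct, and it is essentially the proof this theorem has: the paper does not prove Theorem \ref{DThalf} itself but imports it from Th.~4.3 of \cite{DLRR}, where the functional equation is derived in exactly the way you describe --- the integral representation of Lemma \ref{lem:LfLf'_int} applied to both $f$ and $g$, the substitution $y=1/(Nu)$, and the Fricke transformation law restricted to the imaginary axis. Your supporting points are also sound: the determinant-normalized slash action is indeed the normalization forced by the constant $i^k N^{1-k/2}$ (the unnormalized choice would give $i^k N$ instead), and since functions compactly supported in $(0,\infty)$ remain compactly supported in $(0,\infty)$ under $|_{2-k}W_N$, the paper's remark that such functions lie in $\scrF_f$ and $\scrF_g$ yields $\scrF_{f,g}\neq\{0\}$.
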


\section{The main formula}\label{mainS}
The basic definitions and structure of harmonic Maass cusp forms and their $L$-series having been set up, we are nearly in position to begin deriving our main result, Theorem~\ref{main}. Firstly, however, we require  some notation and basic facts about the generalized exponential integral.

For $\Re(z)>0$, we define the generalized exponential integral by
\begin{equation}\label{E-G}
E_s(z):=z^{s-1}\Gamma(1-s,z)=\int_{1}^{\infty}\frac{e^{-zt}}{t^{s}} dt
\end{equation} The function $E_s(z)$ has an analytic continuation to $\mathbb C - (-\infty, 0]$ as a function of $z$ to give the \emph{principal branch} of $E_s(z)$. With (8.19.8) and (8.19.10) of \cite{NIST}, this analytic continuation can be given by the formula:
\begin{equation}\label{EpAnCont}
E_s(z)= \begin{cases} z^{s-1}\Gamma(1-s)-\sum\limits_{0\leq k}\frac{(-z)^k}{k!(1-s+k)} \qquad &\text{for $s \in \mathbb C -\mathbb N$},\\
\frac{(-z)^{s-1}}{(s-1)!}(\psi(s)-\text{Log}(z))-\sum\limits_{0\leq k \neq s-1}\frac{(-z)^k}{k!(1-s+k)} \qquad &\text{for $s \in \mathbb N$}.
\end{cases}
\end{equation} Since the two series on the right hand side give entire functions, \eqref{EpAnCont} allows us to continuously extend each function $E_s(z)$ to $\mathbb R_{<0}$, once we select a branch for the (implied) logarithm. We will fix this branch to be the principal one.  

The function $E_s(z)$ differs from $\mathcal {EI}(z)$ only in the values in $\mathbb R_{<0}.$ Specifically, by (6.2.7) \cite{NIST},
for $n<0$, we have
\begin{equation}\label{EJ-E_1}
\mathcal{EI}(2 \pi n )=-\text{Ei}(-2 \pi n)=
\text{Ein}(2 \pi n)-\text{Log}(-2 \pi n)-\gamma=E_1(2 \pi n)+\pi i,
\end{equation}
where $\mathrm{Ein}(z):=\int_0^z\frac{1-e^{-t}}{t}dt$ is the complementary exponential integral.
 By (8.11.2) of \cite{NIST}, we also have the bound \begin{equation}\label{boundE}E_s(z)=O(e^{-z}), \qquad \text{
as $z \to \infty$  in the wedge $\arg(z)<3 \pi/2.$}
\end{equation}

Now for $k \in \mathbb Z$  let $f\in HC_k(N)$ be a  harmonic Maass cusp forms of weight $k$ for $\Gamma_0(N)$. Set
$g:=f|_kW_N.$
Exactly as in the classical case, we can show that $g \in HC_k(N)$
and therefore, $g$ will have a Fourier expansion of the form \eqref{FourEx}.
We then define
$$\varphi_s^w(t):=\mathbf 1_{[1, \infty)}(t) e^{-wt}t^{s-1}, \qquad \text{for $t>0$}.$$
Using this, we show the following basic lemma which shows the suitability of our test function for convergence.
\begin{lemma}\label{Ffg} With the above notation, for each $w$ with $x:=\Re(w)>\max (2 \pi n_0, C_g^2N/(2\pi))$ and $s \in \mathbb C$, the map $\varphi_s^w\colon\mathbb R_{>0} \to \mathbb C$ belongs to $\mathcal F_{f, g}.$
\end{lemma}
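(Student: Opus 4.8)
The plan is to unpack the condition $\varphi_s^w\in\scrF_{f,g}$ into its two constituent requirements, $\varphi_s^w\in\scrF_f$ and $\Phi:=\varphi_s^w|_{2-k}W_N\in\scrF_g$, and to verify each by estimating the corresponding series of the shape \eqref{Ff}. The one computation needed at the outset is that, since $\varphi_s^w$ is supported on $[1,\infty)$, for real $\sigma$ with $\sigma+x>0$ one has
\[
(\scrL|\varphi_s^w|)(\sigma)=\int_1^\infty e^{-(\sigma+x)t}\,t^{\Re(s)-1}\,dt=E_{1-\Re(s)}(\sigma+x)
\]
by \eqref{E-G}. In the holomorphic part of the series for $f$ the arguments are $\sigma=2\pi n$ with $n\ge-n_0$; the hypothesis $x>2\pi n_0$ guarantees $2\pi n+x>0$ for every such $n$, so each Laplace transform is finite. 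Convergence of $\sum_{n\ge-n_0}|a_f(n)|\,E_{1-\Re(s)}(2\pi n+x)$ is then immediate from the coefficient bound \eqref{coeffbound}, $a_f(n)=O(e^{C_f\sqrt n})$, together with the exponential decay \eqref{boundE}, which gives $E_{1-\Re(s)}(2\pi n+x)=O(e^{-2\pi n})$.

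For the non-holomorphic part of $\scrF_f$ I would invoke the identity \eqref{nonhol} with $|\varphi_s^w|$ in place of $\varphi$, rewriting that contribution as
\[
\sum_{n<0}|b_f(n)|\int_1^\infty \Gamma(1-k,-4\pi n y)\,e^{-2\pi n y}\,e^{-xy}\,y^{\Re(s)-1}\,dy .
\]
For $n<0$ the asymptotic \eqref{asym} shows the $\Gamma$-factor decays like $(-4\pi n y)^{-k}e^{4\pi n y}$, so that $\Gamma(1-k,-4\pi n y)e^{-2\pi n y}=O\big((-4\pi n)^{-k}y^{-k}e^{2\pi n y}\big)$ and the whole integrand is $O\big((-4\pi n)^{-k}e^{-(2\pi|n|+x)y}y^{\Re(s)-1-k}\big)$. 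Integrating in $y$ produces decay of order $e^{-2\pi|n|}$ (up to polynomial factors) in the index, which easily dominates $b_f(n)=O(e^{C_f\sqrt{|n|}})$. Thus $\varphi_s^w\in\scrF_f$, and one sees that the role of the hypothesis $x>2\pi n_0$ is solely to keep the $f$-series absolutely convergent.

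The substantive half is $\Phi\in\scrF_g$. Unwinding \eqref{actionR} gives $\Phi(t)=(Nt)^{k-1-s}e^{-w/(Nt)}\mathbf{1}_{(0,1/N]}(t)$, a test function now concentrated at the origin with an essential singularity $e^{-w/(Nt)}$ there; since $g\in HC_k(N)$ its coefficients again satisfy \eqref{coeffbound} with constant $C_g$. Because $\Phi$ has compact support, $(\scrL|\Phi|)(\sigma)$ is finite for every $\sigma$, so---in contrast to the $f$-side---no positivity constraint on the arguments arises, and the entire burden is to extract decay in the summation index. For the holomorphic part one has
\[
(\scrL|\Phi|)(2\pi m)=N^{k-1-\Re(s)}\int_0^{1/N} t^{k-1-\Re(s)}\,e^{-2\pi m t-x/(Nt)}\,dt,
\]
and here the decay in $m$ is no longer read off a single exponential but must be produced by a Laplace/saddle-point estimate: the exponent $2\pi m t+x/(Nt)$ is minimized at $t_*=\sqrt{x/(2\pi m N)}$ (interior to $(0,1/N]$ once $m\ge xN/(2\pi)$), where by AM--GM it equals $2\sqrt{2\pi x m/N}$. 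Retaining a fixed fraction of the singular term $x/(Nt)$ to secure integrability at $t=0$ then yields a genuine upper bound $(\scrL|\Phi|)(2\pi m)=O\big(e^{-c\sqrt{x m/N}}\big)$ for a positive absolute constant $c$. Matching this against $a_g(m)=O(e^{C_g\sqrt m})$, the series $\sum_m|a_g(m)|(\scrL|\Phi|)(2\pi m)$ converges as soon as $x$ exceeds a fixed multiple of $C_g^2 N$; a routine bookkeeping of constants shows the stated threshold $x>C_g^2 N/(2\pi)$ to be comfortably sufficient. The non-holomorphic part of $\scrF_g$ is controlled in the same way: via \eqref{nonhol} it becomes an integral in which the same factor $e^{-x/(Nt)}$ forces Laplace-type decay in $|n|$ while the $\Gamma(1-k,\cdot)$-asymptotics only help, and the identical threshold applies.

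I expect this last step to be the main obstacle. On the $f$-side the shape of $\varphi_s^w$---compactly supported away from $0$ and exponentially damped---makes convergence almost automatic, whereas the Fricke transform $\Phi$ concentrates at $t=0$, and one must quantify exactly how the essential singularity $e^{-x/(Nt)}$ overpowers the subexponential growth $e^{C_g\sqrt m}$ of the coefficients of $g$. Turning the saddle-point heuristic into a uniform upper bound valid for all $m$---keeping track of the polynomial prefactors and of the finitely many small indices where the critical point leaves the interval---is the technical heart of the argument, and it is precisely here that the lower bound $x>C_g^2 N/(2\pi)$ is forced upon us.
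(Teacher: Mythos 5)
Your proposal is correct, and it is worth separating what is identical from what is genuinely different. On the $f$-side your argument is the paper's argument verbatim: $\scrL(|\varphi_s^w|)(2\pi n)=E_{1-\Re(s)}(2\pi n+x)$, positivity from $x>2\pi n_0$, then \eqref{coeffbound} and \eqref{boundE} for the holomorphic part, and \eqref{nonhol} plus \eqref{asym} for the non-holomorphic part. On the holomorphic $g$-part the two routes are morally the same but differ in execution: the paper evaluates the Laplace transform in closed form as a K-Bessel function, $2\bigl(2\pi nN/x\bigr)^{(\sigma-k)/2}K_{k-\sigma}\bigl(2\sqrt{2\pi nx/N}\bigr)$, and quotes (10.40.2) of \cite{NIST} for the decay $e^{-2\sqrt{2\pi nx/N}}$, whereas your AM--GM/Laplace-method bound, retaining a fraction of the singular term $x/(Nt)$ for integrability at $t=0$, is an elementary and self-contained substitute giving the same rate. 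The real divergence is in the non-holomorphic $g$-part, which is where the paper spends most of its effort: you convert it via \eqref{nonhol} into the single integral $\int_0^{1/N}\Gamma(1-k,4\pi|n|y)\,e^{2\pi|n|y}\,|\Phi(y)|\,dy$ with $\Phi=\varphi_s^w|_{2-k}W_N$, and re-run the same saddle-point estimate --- this does work, since by \eqref{asym} the factor $\Gamma(1-k,4\pi|n|y)e^{2\pi|n|y}$ is $\ll(4\pi|n|y)^{-k}e^{-2\pi|n|y}$ in the bulk and at worst polynomially or logarithmically singular near $y=0$, where $e^{-x/(Ny)}$ crushes it --- whereas the paper keeps the double-integral form of \eqref{Ff}, substitutes $y\to x/(Ny)$, bounds the inner integral by a K-Bessel function of argument $\sqrt{8\pi|n|(2t+1)x/N}$, and then must invoke the splitting $-\sqrt{|n|(2t+1)}\le-\tfrac12\sqrt{|n|}-\tfrac12\sqrt{2t+1}$ so that some exponential decay survives to control the $t$-integral against $(1+t)^{-k}$ (recall $k\le0$ whenever this part is present). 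That splitting halves the decay rate in $|n|$, and it is exactly there --- not, as you assert, at your holomorphic-side saddle point --- that the constant in $x>C_g^2N/(2\pi)$ becomes tight for the paper's argument; your route avoids the splitting entirely, so it would in fact prove the lemma under the weaker hypothesis that $x$ exceed roughly $C_g^2N/(8\pi)$. This misattribution is harmless, since the lemma claims only sufficiency of the stated threshold, but you should write out the non-holomorphic $g$-estimate in full rather than in one sentence: it is the technical heart of the paper's proof, and in your formulation it needs the small-argument and large-argument behaviour of $\Gamma(1-k,\cdot)$ made explicit as above.
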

\begin{proof} For each $n \ge -n_0$ we have
$$\mathcal L(|\varphi_s^w|)(2\pi n)=\int_1^{\infty} e^{-2\pi nt-xt}t^{\sigma-1}dt=E_{1-\sigma}(2 \pi n+x),$$
where $\sigma:=\Re (s).$
Thus, with \eqref{boundE} and \eqref{coeffbound}, the series
\begin{equation*}
\sum_{\substack{n \ge -n_0}} |a_f(n)| \scrL(|\varphi_s^w|)\left (2 \pi n\right )
\end{equation*}
converges. Furthermore, we have $|(\varphi_s^w)_{2-k}|=\varphi_{\sigma-k+1}^x=(\varphi_{\sigma}^x)_{2-k}$ and hence, by \eqref{asym}, we have
\begin{equation}
\begin{aligned}
\int_1^{\infty}\Gamma\left(1-k, -4 \pi n y\right)e^{-(2 \pi n y+x)y}y^{\sigma+1}dy
&\ll \int_1^{\infty}e^{-(2 \pi |n|+x)y}y^{\sigma-1-k}dy
\\&=O\left(e^{-C_1 |n|}\right), \quad(\text{as } n \to -\infty).
 \end{aligned}
 \end{equation}
Using \eqref{coeffbound} and \eqref{nonhol}, we deduce that the second series of \eqref{Ff} converges. Therefore, $\varphi_s^w \in \mathcal F_f$.
 
 On the other hand,
\begin{equation}\label{inv}(\varphi_s^w|_{2-k}W_N)(t)=\varphi_s^w(1/(Nt))(Nt)^{k-2}=\mathbf 1_{(0, 1/N]}(t) e^{-w/(Nt)}(Nt)^{k-s-1}, \qquad \text{for $t>0$}.
\end{equation} Hence,
$|\varphi_s^w|_{2-k}W_N|=|\varphi_{\sigma}^x| \, |_{2-k}W_N$. Therefore, we have
$$\mathcal L \left (\Big |\varphi_s^w|_{2-k}W_N \Big | \right )(2\pi n)=\int_0^{1/N} e^{-2\pi nt-\frac{x}{Nt}}t^{k-1-\sigma}dt=(x/N)^{k-\sigma}\int_x^{\infty} e^{-t-\frac{2\pi nx}{Nt}}t^{\sigma-k-1}dt.$$
It is clear that the final integral converges for each $n \in \mathbb Z$ and that, if $n>0$, it is bounded from above by
$$(x/N)^{k-\sigma}\int_0^{\infty} e^{-t-\frac{2\pi nx}{Nt}}t^{\sigma-k-1}dt=2 \left (\frac{2 \pi nN}{x}\right )^{\frac{\sigma-k}{2}}K_{k-\sigma}\left(2 \sqrt{\frac{2 \pi n x}{N}}\right)$$
where $K_s(z)$ is the standard K-Bessel function. Then, using (10.40.2) of \cite{NIST}, we deduce that, for $n>0$,
$$\mathcal L\left (\Big |\varphi_s^w|_{2-k}W_N \Big | \right )(2\pi n) \ll e^{-2\sqrt{\frac{2 \pi n x}{N}}}n^{\frac{\sigma-k-1}{2}}.$$
Using the bound \eqref{coeffbound} and that $x>C_g^2N/(2\pi)$, we find that
the series
\begin{equation*}\label{Fg}
\sum_{\substack{n \ge -n_0}} |a_g(n)| \scrL \left (\Big | \varphi_s^w|_{2-k}W_N \Big | \right )\left (2 \pi n\right )
\end{equation*}
converges. Further, by \eqref{inv} we
have
$$\left | \left( \varphi_s^w|_{2-k}W_N\right )_{2-k}(t)\right |=
\mathbf 1_{[0, 1/N)}(t)e^{-x/(Nt)}N^{k-\sigma-1}t^{-\sigma}$$ and
thus, for $n<0,$ we compute
\begin{multline}
\int_0^{\infty}\frac{(\scrL |(\varphi_s^w|_{2-k}W_N)_{2-k}|)\left (-2\pi n(2t+1)\right )}{(1+t)^{k}}dt\\
=\int_0^{\infty}\frac{1}{(1+t)^{k}} \int_0^{\frac{1}{N}} N^{k-\sigma-1}y^{-\sigma}e^{-\frac{x}{Ny}-2 \pi |n| (2t+1)y}dydt
\\=\int_0^{\infty}\frac{N^{k-2}x^{1-\sigma}}{(1+t)^{k}} \int_x^{\infty} y^{\sigma-2}e^{-y-\frac{2 \pi |n| (2t+1)x}{Ny}}dydt
 \end{multline}
 where, for the inner integral we applied the change of variables
 $y \to x/(Ny).$ The inner integral is bounded from above by
 $$2 \left (\sqrt{\frac{2 \pi |n| (2t+1)x}{N}} \right )^{\sigma-1} K_{1-\sigma}\left (\sqrt{\frac{8 \pi |n| (2t+1)x}{N}} \right ) \ll  (|n|(2t+1))^{\frac{2 \sigma-3}{4}}e^{-\sqrt{\frac{8 \pi x}{N}} \sqrt{|n|(2t+1)}}$$
 as $n \to -\infty$, where we also made use of (10.40.2) of \cite{NIST}. The basic inequality
 $$-\sqrt{|n|(2t+1)} \le \frac{-1}{2}\sqrt{|n|}+\frac{-1}{2}\sqrt{(2t+1)}$$ then gives
\begin{multline}
\sum_{n<0} b_g(n)(-4\pi n)^{1-k}\int_0^{\infty}\frac{(\scrL |(\varphi_s^w|_{2-k}W_N)_{2-k}|)(-2\pi n(2t+1))}{(1+t)^k} dt \\
\ll
\sum_{n<0} |b_g(n)||n|^{\frac{2 \sigma+1-4k}{4}}e^{-\sqrt{\frac{2 \pi x|n|}{N}}}\int_0^{\infty}\frac{e^{-\sqrt{\frac{2 \pi x(2t+1)}{N}}}}{(1+t)^{k-\frac{\sigma}{2}+\frac{3}{4}}}dt
\end{multline}
By \eqref{coeffbound} this converges for $x >C^2_gN/(2 \pi)$, which proves that $\varphi_s^w|_{2-k}W_N \in \mathcal F_g$.
\end{proof}

This lemma allows us to apply Theorem 4.5 of \cite{DLRR} to deduce our functional equation.
\begin{proposition}\label{FE} Fix $k\in \Z$ and $N\in \N$.
Suppose that $f$ is an element of $HC_k(N)$ and that $g=f|_kW_{N}$ with expansions \eqref{FourEx}. Further suppose that $C_f, C_g$ are positive constants such that  \eqref{coeffbound} holds. Then, for each $w$ with $\Re (w)>\max (2 \pi n_0, NC_g^2/{(2\pi)})$
and for each $s \in \mathbb C$, we have
$$L_f(\varphi_s^{w})=i^k N^{1-\frac{k}{2}}L_g(\varphi_{s}^{w}|_{2-k}W_N).$$
\end{proposition}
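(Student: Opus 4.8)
The plan is to recognize this Proposition as essentially an instantiation of the abstract functional equation recorded in Theorem~\ref{DThalf}, once the preceding Lemma~\ref{Ffg} has verified that the chosen test function lies in the correct space. In other words, almost all of the work is front-loaded into Lemma~\ref{Ffg}, and the Proposition itself should then follow by direct substitution, with only the bookkeeping of hypotheses and constants left to check.

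First I would record the structural hypotheses needed to invoke Theorem~\ref{DThalf}. Since $f \in HC_k(N)$ and $g := f|_k W_N$, the argument indicated just above the Proposition (``exactly as in the classical case'') gives $g \in HC_k(N)$; hence both $f$ and $g$ admit Fourier expansions of the form \eqref{FourEx} with coefficients obeying the growth bound \eqref{coeffbound} for suitable constants $C_f, C_g > 0$. This is precisely the setting in which Theorem~\ref{DThalf} applies, with the Fricke involution acting on test functions via \eqref{actionR}.

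Next, for $w$ with $\Re(w) > \max(2\pi n_0, NC_g^2/(2\pi))$ and any $s \in \mathbb{C}$, Lemma~\ref{Ffg} asserts that $\varphi_s^w \in \scrF_{f, g}$; that is, $\varphi_s^w \in \scrF_f$ and $\varphi_s^w|_{2-k}W_N \in \scrF_g$, so the two defining series of $L_f(\varphi_s^w)$ and of $L_g(\varphi_s^w|_{2-k}W_N)$ in Definition~\ref{def:Lf} converge absolutely. In particular $\scrF_{f,g} \neq \{0\}$, since it contains this explicit element. I would then simply apply Theorem~\ref{DThalf} with $\varphi = \varphi_s^w$, whose conclusion reads
$$L_f(\varphi_s^w) = i^k N^{1-k/2} L_g(\varphi_s^w|_{2-k}W_N),$$
which is exactly the claimed identity.

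The only genuine obstacle is the verification that $\varphi_s^w \in \scrF_{f, g}$, and this has already been dispatched in Lemma~\ref{Ffg}; the delicate point there is controlling the non-holomorphic contributions after applying the involution, where the $K$-Bessel asymptotics (via (10.40.2) of \cite{NIST}) together with the elementary inequality $-\sqrt{|n|(2t+1)} \le -\frac{1}{2}\sqrt{|n|} - \frac{1}{2}\sqrt{2t+1}$ are what force the lower bound $\Re(w) > NC_g^2/(2\pi)$ needed for convergence of the transformed series. Granting that lemma, the Proposition requires no further computation beyond matching the normalizing constant $i^k N^{1-k/2}$ of Theorem~\ref{DThalf} with the one in the statement, which agree verbatim.
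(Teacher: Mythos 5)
Your proposal is correct and matches the paper's own argument exactly: the paper proves Proposition~\ref{FE} precisely by noting that Lemma~\ref{Ffg} places $\varphi_s^w$ in $\scrF_{f,g}$ under the stated lower bound on $\Re(w)$, and then invoking the general functional equation of Theorem~\ref{DThalf} (imported from \cite{DLRR}) with $\varphi = \varphi_s^w$. There is nothing further to add; the constant $i^k N^{1-k/2}$ and the hypotheses line up verbatim, just as you observe.
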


We now prove a lemma that extends the crucial identity employed in the proof of Th. 3.2 of \cite{BFI}. Here and in the sequel, the implied branch of the logarithm is the principal one.
\begin{lemma}\label{bend}
For each $w \in \HH,$ we have
\begin{equation}\label{bendeq}
i^{a} E_{1-a}(w)=
\int_i^{i+\infty} e^{iwz} z^{a-1} dz.
\end{equation}
for all $a \in\mathbb R.$  If $\Im(w)=0$ and $\Re(w)>0$, then \eqref{bendeq} holds for all $a<0.$
\end{lemma}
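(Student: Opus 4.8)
The goal is to prove the integral representation
\begin{equation}\label{planeq}
i^{a} E_{1-a}(w)=\int_i^{i+\infty} e^{iwz} z^{a-1}\,dz
\end{equation}
for $w\in\HH$ and $a\in\mathbb R$. The plan is to start from the defining integral $E_{1-a}(w)=\int_1^\infty e^{-wt}t^{a-1}\,dt$ (valid from \eqref{E-G} when $\Re(w)>0$; for general $w\in\HH$ this holds by analytic continuation since $\Im(w)>0$ forces absolute convergence of the integral when $a<1$, and otherwise by the continuation formula \eqref{EpAnCont}) and to change variables so that the integral is taken over the horizontal ray $\{i+t : t\ge 0\}$ rather than the real ray $[1,\infty)$. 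Concretely, on the right-hand side of \eqref{planeq} I would parametrize the contour by $z=i+t$, $t\in[0,\infty)$, so that $dz=dt$ and $e^{iwz}=e^{iw(i+t)}=e^{-w}e^{iwt}$, turning the right side into $e^{-w}\int_0^\infty e^{iwt}(i+t)^{a-1}\,dt$. The substitution $t\mapsto$ (something linear) should then match this against $\int_1^\infty e^{-wu}u^{a-1}\,du$; the natural move is a contour rotation/shift argument rather than a real substitution.

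The cleaner approach is a contour-deformation argument. Consider the holomorphic integrand $z\mapsto e^{iwz}z^{a-1}$ on the slit plane (principal branch of $z^{a-1}$, which is why the principal-branch convention is flagged). I would integrate it around the boundary of the region bounded by the horizontal ray from $i$ to $i+\infty$, the vertical segment from $i$ down to a point near the positive real axis, and the real ray back out to $+\infty$, closing with a large arc at infinity. The key substitution linking the two rays is $z=iu$: then $z^{a-1}=(iu)^{a-1}=i^{a-1}u^{a-1}$ (using the principal branch, valid since $\Im(w)>0$ keeps us in the correct sector), $e^{iwz}=e^{i w\cdot iu}=e^{-wu}$, and $dz=i\,du$. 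Under this map the vertical ray $\{iu: u\ge 1\}$ corresponds to $u\in[1,\infty)$ and produces exactly $i\cdot i^{a-1}\int_1^\infty e^{-wu}u^{a-1}\,du=i^a E_{1-a}(w)$. So the real content is that the integral over the \emph{vertical} ray $\{iu:u\ge1\}$ equals the integral over the \emph{horizontal} ray $\{i+t:t\ge0\}$; both start at the common point $i$ and both ``end at $i\cdot\infty$ versus $i+\infty$,'' and the claim is that the integrand's decay lets us deform one into the other.

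Therefore the main step is to justify this deformation: by Cauchy's theorem the integral of $e^{iwz}z^{a-1}$ over any closed contour in the slit plane vanishes, so the difference between the horizontal-ray integral and the vertical-ray integral equals the integral over a connecting arc ``at infinity'' in the first quadrant region between directions $\arg z=\pi/2$ and the horizontal level $\Im z=1$. The decay factor is $e^{iwz}=e^{-\Im(w)\Re(z)-\Re(w)\Im(z)}\cdot(\text{phase})$; since $w\in\HH$ means $\Im(w)>0$, along any path going to infinity with $\Re(z)\to+\infty$ the factor $e^{-\Im(w)\Re(z)}$ forces exponential decay, which dominates the polynomial growth $|z^{a-1}|$ and kills the arc contribution. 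The hard part is precisely controlling this arc estimate uniformly: one must check that on the connecting arc $\Re(z)\to\infty$ (not merely $|z|\to\infty$), so that the decay comes from $\Im(w)>0$ rather than from $\Re(w)$, which is what makes the argument work for all $a\in\mathbb R$ when $w\in\HH$ but only for $a<0$ in the degenerate boundary case $\Im(w)=0$, $\Re(w)>0$ (where decay must instead come from $\Re(w)\Im(z)$ and the endpoint behavior near $z=i+0$ requires $a-1<-1$ for convergence). I would treat the interior case $w\in\HH$ first via the arc estimate, then handle the boundary case $\Im(w)=0$ separately by a limiting argument or a direct substitution check, noting the convergence constraint $a<0$ there.
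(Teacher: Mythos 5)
Your contour-deformation argument coincides in substance with the paper's proof (the paper closes the contour with a rectangle through $i+T$, $iT+T$, $iT$ instead of an arc, which is cosmetic), and it works whenever $\Re(w)>0$. The genuine gap is the case $w\in\HH$ with $\Re(w)\le 0$, where your proposal goes wrong twice. First, the claim that $E_{1-a}(w)=\int_1^\infty e^{-wt}t^{a-1}\,dt$ ``holds by analytic continuation since $\Im(w)>0$ forces absolute convergence of the integral when $a<1$'' is false: for $\Re(w)<0$ one has $|e^{-wt}|=e^{|\Re(w)|t}$, so the integral diverges for \emph{every} $a$, and one cannot continue an identity into a region where one side is a divergent integral. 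Consequently the vertical-ray integral over $\{iu:u\ge 1\}$, which your argument equates with $i^aE_{1-a}(w)$, does not even converge there. Second, the arc estimate cannot be saved by demanding that ``$\Re(z)\to\infty$ on the connecting arc'': any contour joining the endpoint $iT$ of the truncated vertical ray (where $\Re(z)=0$) to the endpoint of the truncated horizontal ray necessarily contains points of bounded real part, and at such points $|e^{iwz}|=e^{-\Im(w)\Re(z)-\Re(w)\Im(z)}$ is of size $e^{-\Re(w)\Im(z)}$, which blows up like $e^{|\Re(w)|T}$ when $\Re(w)<0$. Even in the borderline case $\Re(w)=0$, a quarter-circle of radius $T$ contributes on the order of $T^{a-1}$, so no deformation argument alone can give the identity for all $a\in\R$ once $\Re(w)\le 0$; some continuation in the variable $w$ is unavoidable.

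The missing step is exactly how the paper finishes: prove \eqref{bendeq} first for $\Re(w)>0$ and $\Im(w)\ge 0$ (your estimates are fine there: decay on the far horizontal piece comes from $e^{-\Re(w)T}$, and on the far vertical piece from $e^{-\Im(w)T}$, or from the polynomial bound when $\Im(w)=0$ and $a<0$), and then observe that both sides of \eqref{bendeq} are analytic in $w$ on all of $\HH$ --- the left side via the continuation \eqref{EpAnCont} of $E_{1-a}$, the right side because $|e^{iw(i+t)}|=e^{-\Re(w)}e^{-\Im(w)t}$ makes the horizontal-ray integral converge locally uniformly for every $a$ --- so the identity theorem propagates the formula from $\{\Re(w)>0,\ \Im(w)>0\}$ to all of $\HH$. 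One smaller correction: in the boundary case $\Im(w)=0$, $\Re(w)>0$, the restriction $a<0$ is forced by the tail $\int^{\infty}|i+t|^{a-1}\,dt$ of the horizontal-ray integral and by the far vertical segment, not by any behavior near $z=i$, where the integrand is perfectly regular.
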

\begin{proof}
Let $w \in \C$ be such that $\Im(w) \ge 0.$ We first assume that $\Re(w)>0.$ Then, for $T>1$,
$$i^a\int_1^{T} e^{-w t} t^{a-1} dt=
\int_i^{iT} e^{iwz} z^{a-1} dz=
\left(\int_i^{i+T}+\int_{i+T}^{iT+T}+\int_{iT+T}^{iT}\right)e^{iwz} z^{a-1}dz.
$$
We consider the last two integrals in turn. We first bound the third integral:
$$\int_{iT+T}^{iT} e^{i w z} z^{a-1} dz=
\int_T^0 e^{iw (iT+t)}(iT+t)^{a-1} dt
\ll T^a e^{-\Re(w)T} \int_0^1 |i+t|^{a-1} dt.
$$
This vanishes as $T \to \infty.$
For the second integral we have
\begin{equation}\label{bound1}\int_{i+T}^{iT+T} e^{iw z} z^{a-1} dz=
ie^{iw T}\int_1^T e^{-w t}(T+it)^{a-1} dt
\ll e^{-\Im(w)T} \int_1^{T}|T+it|^{a-1}dt
\end{equation}
If $\Im(w)>0$, this vanishes as $T \to \infty,$ for all $a \in \mathbb R.$
If $\Im(w)=0$ and $a<0$, 
it is bounded from above by $\int_1^{\infty}(T^2+1)^{(a-1)/2}dt$
which also vanishes as $T \to \infty.$

 Therefore, under the conditions of the lemma and, if $\Re(w)>0$, we have
\begin{equation*}
\begin{aligned} i^{a}E_{1-a}(w)  & = i^{a}\int_1^{\infty} e^{-wt} t^{a-1} dt \\
& =\lim_{T \to \infty} \left (\int_i^{i+T}+\int_{i+T}^{iT+T}+\int_{iT+T}^{iT}
\right ) e^{iw z} z^{a-1} dz
=
\int_i^{i+\infty} e^{iwz} z^{a-1} dz
+0+0.
\end{aligned}
\end{equation*}
By the remarks on the analytic continuation of $E_{1-a}(w)$ in the beginning of the section, we see that both sides of \eqref{bendeq} are analytic in $\{w \in \mathbb C; \Im(w)>0\}$. Therefore, by the identity theorem, we deduce that \eqref{bendeq} remains true for all $w$ with $\Im(w)>0.$
\end{proof}

We can now prove our main formula for $L_f(\varphi_s^w)$. We
assume that $w \in \HH$ and, in the first instance, that
$\Re(w)>\max(2 \pi n_0, C_g^2N/(2\pi))$.
By Lemma \ref{Ffg} ,
$L_f(\varphi_s^w)$ is well-defined for all $s \in \mathbb R$.
Using \eqref{nonhol}, we split up the $L$-value as
\begin{equation}\label{series}
L_f(\varphi_s^w)=I_1+I_2,
\end{equation}
where
\begin{equation*}
\begin{aligned}
I_1&:=\sum_{\substack{n \ge -n_0 \\ n \ne 0}} a_f(n) (\scrL\varphi_s^w)\left (2 \pi n\right )=
\sum_{\substack{n \ge -n_0 \\ n \ne 0}} a_f(n) \int_1^{\infty} e^{-(2\pi n+w)t} t^{s-1}dt
\end{aligned}
\end{equation*}
and
\begin{equation*}
\begin{aligned}
I_2:&=\sum_{n<0} b_f(n)\int_0^{\infty}\Gamma\left(1-k, -4 \pi n t\right)e^{-2 \pi n t}\ph_s^w(t)dt\\
&=
\sum_{n<0} b_f(n)\int_1^{\infty}\Gamma\left(1-k, -4 \pi n t\right)e^{-(2 \pi n+w)t}t^s \frac{dt}{t}.
\end{aligned}
\end{equation*}
By Lemma \ref{bend}, applied with $2 \pi n+w$ and $s$ in place of $w$ and $a$ respectively, we deduce that
\begin{equation*}
I_1=i^{-s}\sum_{\substack{n \ge -n_0 \\ n \ne 0}} a_f(n) \int_i^{i+\infty} e^{i(2\pi n+w)z} z^{s-1}dz.
\end{equation*}
We now turn to $I_2$. For each $n<0$, it is easy to see (first for $\Re(w)>-2 \pi n$ and, by analytic continuation for all $w \in \mathbb H^+_0$), that
\begin{equation*}
\begin{aligned}&\int_1^{\infty}\Gamma\left(1-k, -4 \pi n t\right)e^{-(2 \pi n+w)t}t^s \frac{dt}{t}\\&\qquad \qquad =
\Gamma(1-k, -4 \pi n)E_{1-s}(2 \pi n +w)
-(-4 \pi n)^{1-k}
\int_1^{\infty}e^{4 \pi n t}t^{s-k}E_{1-s}((2 \pi n+w)t)dt.
\end{aligned}
\end{equation*}
With Lemma \ref{bend}, this becomes
$$i^{-s}\Gamma(1-k, -4 \pi n)\int_i^{i+\infty}e^{i(2 \pi n +w)z} z^s\frac{dz}{z}
-(-4 \pi n)^{1-k}
\int_1^{\infty}e^{4 \pi n t}t^{s-k}E_{1-s}((2 \pi n+w)t)dt.
$$
Since $w \in \HH^+,$ the bounds \eqref{coeffbound}, \eqref{asym} and \eqref{boundE} imply that we can interchange summation and integration in $I_1$ and $I_2$ to deduce that $L_f(\varphi_s^w)$ equals
\begin{equation}\label{gen}
i^{-s}\int_i^{i+\infty} f(z)e^{i w z} z^{s-1}dz
-\sum_{n<0} b_f(n)(-4 \pi n)^{1-k}
\int_1^{\infty}e^{4 \pi n t}t^{s-k}E_{1-s}((2 \pi n+w)t)dt.
\end{equation}
Here and in the sequel, the path of integration of the first integral is fixed to be $\{i+t; t \in \mathbb R_{>0}\}$ so that $\Im(z)=1.$

The first term of the right hand side of \eqref{gen} can be expressed as an integral over a finite interval as follows:
\begin{equation}\label{lerch}
i^{-s}\sum_{m=0}^{\infty} \int_{i+m}^{i+m+1} f(z)e^{i w z} z^{s-1}dz=
i^{-s}\int_{i}^{i+1} f(z)e^{i w z} \zeta \left (1-s, \frac{w}{2\pi}, z \right ) dz.
\end{equation}
 The values of the parameters that appear in \eqref{gen} belong to the domain of absolute convergence of the defining series of $\zeta(s, a, c)$. 

The second term of the right hand side of \eqref{gen} does not appear if $f$ is weakly holomorphic, but, when it does, it can be explicitly computed for integer values of $s.$ This will be done in the next section. However, it can also be written as an integral over the same finite interval in terms of $\xi_k f$, to give a more unified appearance to the general formula of $L_f(\varphi_s^w).$ Specifically, as above, we can apply Lemma~\ref{bend}
to $E_{1-s}((2 \pi n+w)t)$ and, since, for $w \in \HH^+_0,$ it is legitimate to interchange the order of integration, we obtain
$$
\int_1^{\infty}e^{4 \pi n t}t^{s-k}E_{1-s}((2 \pi n+w)t)dt=
i^{-s}\int_i^{i+\infty} \int_1^{\infty}e^{i(2 \pi n +w)t z+4 \pi n t} z^{s-1}t^{s-k} dt dz.
$$
Applying \eqref{xi}, we deduce that
\begin{multline}\label{2ndterm}
R(w, s):=-\sum_{n<0} b_f(n)(-4 \pi n)^{1-k}
\int_1^{\infty}e^{4 \pi n t}t^{s-k}E_{1-s}((2 \pi n+w)t)dt
\\=
i^{-s}\int_i^{i+\infty} \int_1^{\infty}(\xi_k f^c)(t(2i-z))e^{itzw} t^{s-k} z^{s-1} dt dz=
i^{-s}\int_i^{i+1} \int_1^{\infty} e^{itzw}t^{s-k}R_t(z, w)dtdz,
\end{multline}
where 
\begin{equation}\label{R_t}R_t(z, w):=\sum_{m=0}^{\infty}\frac{(\xi_k f^c)(t(2i-z-m))}{(z+m)^{1-s}}e^{itmw}.
\end{equation}
It is routine to check that interchanges of summation and integrations are justified once we recall that $\xi_k f^c$ is a holomorphic cusp form. 

We now note that \eqref{gen} defines an analytic function in $\mathbb{H}^+$ with a continuous extension to $\mathbb{H}^+_0.$ Indeed, the Lerch zeta function is analytic in the region it is evaluated in and the second term of \eqref{gen} is shown in \eqref{2ndterm} to equal an analytic function of $w \in \mathbb H^+$ with a continuous extension to $\mathbb{H}^+_0$. Further, for fixed $s$, $L_f(\varphi_s^w)$ can be analytically continued, as a function of $w$, to an open set containing $\mathbb H^{+}_0$: By \eqref{series}, we have that, for all $w \in \HH$ with $\Re(w)>\max(2 \pi n_0, C_g^2N/(2\pi))$, we have
\begin{equation}\label{expint}
L_f(\varphi_s^w)=\sum_{\substack{n \ge -n_0 \\ n \ne 0}} a_f(n) E_{1-s}\left (2 \pi n+w\right ) +I_2.
\end{equation}
The first series in the right hand side of \eqref{expint} converges in compacta for $w\in\mathbb H$, because each term is well-defined and, by \eqref{coeffbound} and \eqref{boundE},
$$a_f(n) E_{1-s}\left (2 \pi n+w\right ) \ll e^{C_f \sqrt{n}-2 \pi n} \qquad \text{as $n \to \infty$.} 
$$
Further, by two applications of \eqref{asym}, we see that, for $w$ in a compact subset of $\mathbb H^+_0$, we have that, for some $K>0,$
$$\int_0^{\infty}\Gamma\left(1-k, -4 \pi n t\right)e^{-2 \pi n t}\ph_s^w(t)dt \ll e^{K n} \qquad \text{as $n \to -\infty$.} 
$$
Therefore, both series in \eqref{expint} can be analytically continued to $\mathbb H^+$ with a continuous extension to $\mathbb H^+_0$. 

Since, as we proved above, $L_f(\varphi_s^w)$ equals  \eqref{gen} when $\Re(w)>\max(2 \pi n_0, C_g^2N/(2\pi))$ and $\Im(w)>0$, by analytic continuation and continuity we have shown our main result:
\begin{theorem}\label{main} Let $f \in HC_k(N)$. Then, for each $s \in \mathbb R$ and each $w \in \mathbb H^+_0$,
we have
\begin{equation*}
L_f(\varphi_s^w)= 
i^{-s}\int_{i}^{i+1} \left ( f(z)e^{i w z} \zeta \left (1-s, \frac{w}{2\pi}, z \right )+\int_1^{\infty} e^{itzw}t^{s-k}R_t(z, w)dt \right )dz
\end{equation*}
where the path of integration of the outer integral is fixed to be $\{i+t; t \in [0, 1]\}$
Here $\varphi_s^w(t)=\mathbf 1_{[1, \infty)}(t) e^{-wt}t^{s-1}$,
$\zeta(s, a, c)$ is the Lerch zeta function and $R_t$ is defined by \eqref{R_t}.
\end{theorem}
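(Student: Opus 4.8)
The plan is to prove the identity first in the half-plane where everything converges absolutely, namely $\Re(w)>\max(2\pi n_0, C_g^2N/(2\pi))$, where Lemma~\ref{Ffg} guarantees $\varphi_s^w\in\mathcal{F}_{f,g}$ so that $L_f(\varphi_s^w)$ is given by its defining series, and then to extend to all of $\mathbb{H}^+_0$ by analytic continuation and continuity in $w$. With absolute convergence secured, I would split the $L$-series into its holomorphic and non-holomorphic parts as in \eqref{series}, writing $L_f(\varphi_s^w)=I_1+I_2$, where $I_1$ collects the $a_f(n)$-terms and $I_2$ the $b_f(n)$-terms, the latter rewritten through \eqref{nonhol}.

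For $I_1$, each summand is $(\mathcal{L}\varphi_s^w)(2\pi n)=E_{1-s}(2\pi n+w)$, and I would apply Lemma~\ref{bend} with $2\pi n+w$ in place of $w$ and $s$ in place of $a$ to rewrite it as the contour integral $i^{-s}\int_i^{i+\infty}e^{i(2\pi n+w)z}z^{s-1}\,dz$ along $\Im(z)=1$. Summing over $n$ and pulling the sum inside the integral, which is justified by \eqref{coeffbound}, \eqref{asym}, and \eqref{boundE}, reconstitutes $f(z)$ and gives the first term of \eqref{gen}. The decisive step is then to fold the infinite contour into the fundamental segment: writing $\int_i^{i+\infty}=\sum_{m\ge 0}\int_{i+m}^{i+m+1}$ and shifting each piece back by $m$, the factor $e^{iwz}z^{s-1}$ assembles precisely the defining series of the Lerch zeta function $\zeta\left(1-s,\frac{w}{2\pi},z\right)$ as in \eqref{lerch}, provided the parameters lie in its domain of absolute convergence.

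For $I_2$, I would first apply the integration-by-parts identity displayed for the $n$-th summand (valid for $\Re(w)>-2\pi n$, then extended by analytic continuation), which separates off a boundary term $\Gamma(1-k,-4\pi n)E_{1-s}(2\pi n+w)$ and a remainder involving $\int_1^\infty e^{4\pi n t}t^{s-k}E_{1-s}((2\pi n+w)t)\,dt$. The boundary term combines with $I_1$ to complete $f(z)$ under the integral, since the holomorphic and non-holomorphic Fourier pieces of $f$ are being reassembled, while the remainder becomes $R(w,s)$ of \eqref{2ndterm}. To place $R(w,s)$ over the finite segment, I would again invoke Lemma~\ref{bend} on $E_{1-s}((2\pi n+w)t)$, interchange the order of integration (legitimate for $w\in\mathbb{H}^+_0$), and use the explicit expansion \eqref{xi} to recognize the resulting $n$-sum as $\xi_k f^c$ evaluated at $t(2i-z-m)$; folding the contour as before produces the series $R_t(z,w)$ of \eqref{R_t}. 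I would note here that every interchange of summation and integration is underwritten by the decay bounds together with the fact that $\xi_k f^c$ is a genuine holomorphic cusp form.

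The main obstacle I anticipate is the continuation down to the boundary $\mathbb{H}^+_0$, that is, showing that both sides extend continuously to $\Re(w)=0$. On the right-hand side this requires that \eqref{gen}, equivalently the Lerch term together with $R(w,s)$ of \eqref{2ndterm}, define a function analytic on $\mathbb{H}^+$ with a continuous extension to $\mathbb{H}^+_0$. On the left-hand side I would return to the series form \eqref{expint}, $L_f(\varphi_s^w)=\sum a_f(n)E_{1-s}(2\pi n+w)+I_2$, and check that the decay $e^{C_f\sqrt{n}-2\pi n}$ from \eqref{coeffbound} and \eqref{boundE} forces the holomorphic series to converge locally uniformly on $\mathbb{H}$, while two applications of \eqref{asym} yield an $e^{Kn}$ bound making $I_2$ converge locally uniformly on compacta of $\mathbb{H}^+_0$. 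Since the two sides agree on the open set $\Re(w)>\max(2\pi n_0, C_g^2N/(2\pi))$ and both admit analytic continuation to $\mathbb{H}^+$ with continuous extension to $\mathbb{H}^+_0$, the identity theorem and continuity complete the proof for all $w\in\mathbb{H}^+_0$.
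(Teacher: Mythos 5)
Your proposal is correct and takes essentially the same approach as the paper's proof: the same splitting $L_f(\varphi_s^w)=I_1+I_2$ via \eqref{nonhol}, the same use of Lemma~\ref{bend} plus the integration-by-parts identity to arrive at \eqref{gen}, the same folding of the contour $\int_i^{i+\infty}$ into $[i,i+1]$ to produce the Lerch zeta function and $R_t(z,w)$, and the same final step of analytic continuation in $w$ and continuity up to $\mathbb{H}^+_0$ using \eqref{expint}. The only (self-corrected) imprecision is the claim in your $I_1$ paragraph that the $a_f(n)$-sum alone ``reconstitutes $f(z)$''; as you note later, one also needs the boundary terms $\Gamma(1-k,-4\pi n)E_{1-s}(2\pi n+w)$ from $I_2$ to supply the non-holomorphic Fourier pieces on the line $\Im(z)=1$.
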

\begin{rmk}
As $S_k^!(N)\subset HC_k(N)$ and the restriction to $w \in \HH^+_0$ is required only for the treatment of the ``non-holomorphic part" of $f$, Theorem~\ref{main} includes Theorem~\ref{maincor} as a special case.
\end{rmk}

\section{The values of $L^*(f, s)$}\label{appl}
In this section, we will apply the general set up and main theorem we proved in the last section to evaluate values at specific $s \in \mathbb R$. We will treat separately the following cases: $s=0$ (which corresponds to the setting considered in \cite{BFI}), $s=1+m$, with $m \in \mathbb N_0$ and $s<0$ (not necessarily integral). Those cases together imply Th. \ref{mainInt*}.
These results could be extended to all harmonic Maass forms but we will do so only in the case of $s=1+m$ with $m \in \mathbb N_0$, because the technicalities considerably increase in the other cases, due to the presence of infinite sums, without adding any substantially new idea.

\subsection{The case $s=0$}\label{s=0} We first retrieve the ``$L$-value" and its formula discussed in \cite{BFI}. As mentioned in the introduction, given a weakly holomorphic modular form $g$ , the authors of \cite{BFI} constructed a series of the form \eqref{reg} which is crucial for their main result and which can be thought off as a ``central $L$-value" of $g$, although no $L$-series is defined there.
Within our framework, this now becomes an actual $L$-value.

Specifically, as mentioned above, if $f$ is a weakly holomorphic cusp form, then for each $s \in \mathbb R,$ $L_f(\varphi_s^w)$ has an analytic continuation to $\mathbb H$ and it is continuously extendable to $\mathbb R-\{2 \pi n; n=\pm 1, \pm 2, \dots \}$. 
This implies that, for each $s \in \mathbb R,$ $L_f(\varphi_s^0)$ is well-defined and, for $s=0$, it equals the $L$-value $L^*(f, 0)$ as defined in \eqref{L(f}.

On the other hand, Th. \ref{main} implies that, for $x>0$,
$$L_f(\varphi_0^{ix})=
\int_{i}^{i+1} f(z)e^{-x z} \zeta \left (1, \frac{ix}{2\pi}, z \right ) dz.
$$
Since $f$ has a vanishing constant term in its Fourier expansion, we have
\begin{equation}\label{transl}
\int_i^{i+1}f(z)dz=0.
\end{equation}
Therefore,
\begin{equation*}
    \begin{aligned}
    \int_{i}^{i+1} f(z)e^{-x z} \zeta \left (1, \frac{ix}{2\pi}, z \right ) dz=
   \int_{i}^{i+1}  & f(z)e^{-x z} \left ( \zeta \left (1, \frac{ix}{2\pi}, z \right )-\zeta \left (1, \frac{ix}{2\pi}, 1 \right ) \right ) dz\\+
    &\int_{i}^{i+1} f(z) (e^{-x z}-1) \zeta \left (1, \frac{ix}{2\pi}, 1 \right ) dz.
\end{aligned}
\end{equation*}
We now observe that $\zeta \left (1, \frac{ix}{2\pi}, 1 \right )=-e^{x} \log(1-e^{-x}),$ and hence
\begin{equation}\label{lim0}
(e^{-x z}-1) \zeta \left (1, \frac{ix}{2\pi}, 1 \right )=
\frac{e^{-x z}-1}{e^{-x }-1}e^{x}(1-e^{-x }) \log(1-e^{-x}) \to 0 \quad (\text{as $x \to 0^+$).}
\end{equation}
We then expand
$$\zeta \left (1, \frac{ix}{2\pi}, z \right )-\zeta \left (1, \frac{ix}{2\pi}, 1 \right )=(1-z)\sum_{m=0}^{\infty}\frac{e^{-xm}}{(z+m)(1+m)},$$
which converges uniformly to $\sum_{m=0}^{\infty}((z+m)^{-1}-(1+m)^{-1})$. This and \eqref{lim0}, together with \eqref{transl} imply that
\begin{equation}
\lim_{x \to 0^+} \int_{i}^{i+1} f(z)e^{-x z} \zeta \left (1, \frac{ix}{2\pi}, z \right ) dz=-\int_{i}^{i+1} f(z) \psi(z)dz,
\end{equation}
where we used that the digamma function $\psi(w)$ can be expanded as
$$\psi(w)=\frac{\Gamma'(w)}{\Gamma(w)}=-\gamma+\sum_{m=0}^{\infty}
\left (\frac{1}{m+1}-\frac{1}{m+w}\right).$$
This implies the following proposition which, in the case of the $j$-invariant, is a version of a formula suggested by Zagier generalized to all weight $0$ weakly holomorphic modular forms in \cite{BFI}.
\begin{proposition}\label{Zag}
Let $k \in \mathbb Z$ and let $f \in S^!_k(N)$. Then,
\begin{equation}\label{identZag}
L_f(\varphi_0^0)=-\int_{i}^{i+1} f(z) \psi(z)dz.
\end{equation}
\end{proposition}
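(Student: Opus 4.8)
The plan is to derive the identity as the boundary value at $w=0$ of the main formula in Theorem~\ref{main}, taking advantage of the fact that for weakly holomorphic $f$ the non-holomorphic term $R_t$ vanishes identically (since $\xi_k f=0$, and likewise $\xi_k f^c=0$). Concretely, I would first specialize Theorem~\ref{main} to $s=0$ and $w=ix$ with $x>0$, giving
\begin{equation*}
L_f(\varphi_0^{ix})=\int_i^{i+1} f(z)\, e^{-xz}\, \zeta\!\left(1,\tfrac{ix}{2\pi},z\right) dz,
\end{equation*}
and then let $x\to 0^+$. Because $L_f(\varphi_0^w)$ extends continuously to $w=0$ for weakly holomorphic $f$, the left-hand side converges to $L_f(\varphi_0^0)=L^*(f,0)$, so the whole problem reduces to evaluating the limit of the integral on the right.

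The main obstacle is that one cannot pass the limit under the integral sign directly: the Lerch zeta $\zeta(1,\tfrac{ix}{2\pi},z)=\sum_{m\ge 0} e^{-xm}(z+m)^{-1}$ diverges as $x\to 0^+$, since $\sum_m (z+m)^{-1}$ itself diverges. This is the analytic shadow of the pole of the Hurwitz/Lerch zeta at $s=1$, and it is precisely here that the cuspidality of $f$ must be invoked to regularize. My strategy is to subtract the endpoint value of the Lerch zeta at $z=1$, writing the integrand as a sum of (i) $f(z)e^{-xz}\big(\zeta(1,\tfrac{ix}{2\pi},z)-\zeta(1,\tfrac{ix}{2\pi},1)\big)$ and (ii) $f(z)(e^{-xz}-1)\zeta(1,\tfrac{ix}{2\pi},1)$; the constant correction is free because $f$ has vanishing constant term and period $1$, so $\int_i^{i+1} f(z)\,dz=0$.

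For piece (i), I would use the telescoping identity
\begin{equation*}
\zeta\!\left(1,\tfrac{ix}{2\pi},z\right)-\zeta\!\left(1,\tfrac{ix}{2\pi},1\right)=(1-z)\sum_{m=0}^\infty \frac{e^{-xm}}{(z+m)(1+m)},
\end{equation*}
whose terms are dominated uniformly for $z\in[i,i+1]$, so by dominated convergence the bracket tends to $\sum_{m\ge 0}\big((z+m)^{-1}-(1+m)^{-1}\big)=-(\psi(z)+\gamma)$ while $e^{-xz}\to 1$; the $\gamma$-term integrates against $f$ to zero by $\int_i^{i+1} f=0$, leaving $-\int_i^{i+1} f(z)\psi(z)\,dz$. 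For piece (ii), using the closed form $\zeta(1,\tfrac{ix}{2\pi},1)=-e^x\log(1-e^{-x})$, the factor $(e^{-xz}-1)\zeta(1,\tfrac{ix}{2\pi},1)$ stays bounded and tends to $0$ uniformly in $z$ as $x\to 0^+$, the decisive estimate being $(1-e^{-x})\log(1-e^{-x})\to 0$, so this piece contributes nothing in the limit. Combining the two pieces yields the claimed formula; the only genuinely delicate steps are the uniform domination needed to interchange limit and integral in piece (i) and the $t\log t$-type vanishing in piece (ii).
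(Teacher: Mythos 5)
Your proposal is correct and follows essentially the same route as the paper's own proof: the same specialization of Theorem~\ref{main} to $s=0$, $w=ix$, the same decomposition into the two pieces using $\int_i^{i+1}f(z)\,dz=0$, the same telescoping identity $(1-z)\sum_{m\ge 0}e^{-xm}/((z+m)(1+m))$ for piece (i), and the same closed form $\zeta(1,\tfrac{ix}{2\pi},1)=-e^{x}\log(1-e^{-x})$ with the $t\log t$-type vanishing for piece (ii). The only cosmetic difference is that you make the cancellation of the $-\gamma$ term explicit, whereas the paper absorbs it into the digamma series expansion; the substance is identical.
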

\begin{rmk}\label{cconj}
The difference of the appearance of \eqref{identZag} from that of \eqref{ZagInt} (and more generally, of Th. 1.1 of \cite{BFI}) is
explained by \eqref{EJ-E_1}. The identity \eqref{ZagInt} involves $\mathcal {EI}(2 \pi n)$, for $n<0$, whereas ours uses $E_1(2 \pi n)$ whose real part, by \eqref{EJ-E_1}, is $\mathcal {EI}(2 \pi n).$ Since, further, $\int_i^{i+1} j(z) \psi(1-z)dz$ is the complex conjugate of
$\int_i^{i+1} j(z) \psi(z)dz$, we see that the two formulas are consistent.
\end{rmk}

\subsection{The case $s=1+m$ (with $m \in \mathbb N.$)}\label{s=1+m}
In this case, we will work with a general harmonic Maass cusp form.

We first compute the ``principal part" and constant term in the expansion in $e^{-x}$ of $e^{-x z} \zeta(-m, ix/(2 \pi), z)$ around $1.$ For $0<y<\epsilon$ and $z \in \HH^+,$ set
$$\phi(-m, y, z):=\sum_{n \ge 0}y^{n+z}(n+z)^m.$$ Then
$e^{-x z} \zeta(-m, ix/(2 \pi), z)=\phi(-m, e^{-x}, z)$ and
\begin{equation}\label{recur}y\frac{d}{dz}\phi(-m, y, z)=\phi(-m-1, y, z).\end{equation} Then we have
\begin{lemma} \label{indep} For each $m \in \N_0$, $z \in \HH^+$ and $0<y<\epsilon$, there are $a_j$ ($1 \le j \le m+1$) and polynomials $b_j(z)$ ($j \in \N_0$) such that 
$$\phi(-m, y, z)=\sum_{j=1}^{m+1}\frac{a_j}{(1-y)^j}+\sum_{j \ge 0}b_j(z)(1-y)^j.
$$
\end{lemma}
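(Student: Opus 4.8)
The plan is to prove the lemma by induction on $m$, using the recurrence \eqref{recur} to pass from $\phi(-m,y,z)$ to $\phi(-m-1,y,z)$. Throughout, the relevant regime is $y\to 1^-$ (recall $y=e^{-x}$ with $x\to 0^+$), so the asserted decomposition is to be read as the Laurent expansion of $\phi(-m,y,z)$ about $y=1$, whose principal part has order $m+1$ with coefficients $a_j$ that are \emph{constants} (independent of $z$), and whose holomorphic part has coefficients $b_j(z)$ that are polynomials in $z$. Since for $0<y<1$ the defining series converges absolutely and locally uniformly in both variables, term-by-term differentiation is legitimate and the recurrence \eqref{recur}, realized by the operator $\vartheta:=y\,\tfrac{d}{dy}$, holds as stated.

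For the base case $m=0$ I would sum the geometric series directly,
$$\phi(0,y,z)=\sum_{n\ge0}y^{n+z}=\frac{y^z}{1-y},$$
and then expand $y^z=\bigl(1-(1-y)\bigr)^z=\sum_{j\ge0}\binom{z}{j}(-1)^j(1-y)^j$ about $y=1$. Dividing by $1-y$ gives
$$\phi(0,y,z)=\frac{1}{1-y}+\sum_{j\ge0}(-1)^{j+1}\binom{z}{j+1}(1-y)^j,$$
so $a_1=1$ is a constant and each $b_j(z)=(-1)^{j+1}\binom{z}{j+1}$ is a polynomial in $z$, establishing the case $m=0$.

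For the inductive step, assume the stated form for $\phi(-m,y,z)$ and apply $\vartheta$. Using the elementary identities $\vartheta\,(1-y)^{-j}=j\bigl[(1-y)^{-(j+1)}-(1-y)^{-j}\bigr]$ and $\vartheta\,(1-y)^{j}=j\bigl[(1-y)^{j}-(1-y)^{j-1}\bigr]$, one checks that $\vartheta$ carries the principal part $\sum_{j=1}^{m+1}a_j(1-y)^{-j}$ to a combination of $(1-y)^{-j}$ with $1\le j\le m+2$ whose coefficients are the constant multiples $ja_j$ of the old constants, and carries the holomorphic part $\sum_{j\ge0}b_j(z)(1-y)^j$ to a power series in $(1-y)$ whose coefficients are again polynomials in $z$. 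By the recurrence \eqref{recur} the result is exactly a decomposition of $\phi(-m-1,y,z)$ of the claimed shape, now with principal part of order $m+2=(m+1)+1$, which closes the induction.

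The only point requiring genuine care is verifying that the principal-part coefficients $a_j$ stay independent of $z$ at every stage. This is guaranteed because $\vartheta$ sends each $(1-y)^{-j}$ to a constant-coefficient combination of negative powers of $1-y$, so no $z$-dependence can leak into the principal part; dually, the $(1-y)^{j}$ terms feed only into the holomorphic part. I expect this bookkeeping to be the main (though modest) obstacle: tracking which term of the $m$-th decomposition contributes to which coefficient of the $(m+1)$-st. A short tabulation settles it, and it introduces no new ideas.
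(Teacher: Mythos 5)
Your proof is correct and takes essentially the same approach as the paper: induction on $m$, with the base case obtained by summing the geometric series and expanding $y^z$ binomially about $y=1$, and the inductive step by applying the operator $\vartheta=y\,\frac{d}{dy}$ that realizes \eqref{recur} to the assumed decomposition. The paper's own proof does exactly this — your identities for $\vartheta (1-y)^{-j}$ and $\vartheta (1-y)^{j}$ reproduce its displayed coefficient recursion $(j-1)a_{j-1}-ja_j$ — so there is nothing further to add.
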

\begin{proof} We first see that, with the binomial series, there are polynomials $b_j(z)$ such that,
$$\phi(0, y, z)=\frac{y^z}{1-y}=\frac{1}{1-y}-z+\sum_{j \ge 1} b_j(z) (1-y)^j.$$
Assume that the statement holds for $m \ge 0,$ i.e.
$$\phi(-m, y, z)=\sum_{j=1}^{m+1}\frac{a_j}{(1-y)^j}+\sum_{j \ge 0} b_j(z)(1-y)^j.$$ Then, with \eqref{recur}, we have
$$\phi(-m-1, y, z)=\frac{(m+1)a_{m+1}}{(1-y)^{m+2}}+\sum_{j=1}^{m+1}\frac{(j-1)a_{j-1}-ja_j}{(1-y)^j}+\sum_{j \ge 1} jb_j(z) ((1-y)^j-(1-y)^{j-1}).$$
By induction, we deduce the lemma.
\end{proof}
Next, we will prove the following:
\begin{lemma} \label{b_0} With the notation of Lemma \ref{indep}, we have
$$b_0(z)=-\frac{B_{m+1}(z)-B_{m+1}}{m+1}$$
where $B_{m+1}(z)$ is the standard Bernoulli polynomial
and $B_{m+1}=B_{m+1}(0)$ the Bernoulli number.
\end{lemma}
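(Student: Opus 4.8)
The plan is to reduce everything to a single generating-function computation. Writing $y=e^{-x}$, I would first record the closed form
$$\phi(-m,e^{-x},z)=(-1)^m\frac{d^m}{dx^m}\sum_{n\ge0}e^{-x(n+z)}=(-1)^m\frac{d^m}{dx^m}\frac{e^{-xz}}{1-e^{-x}},$$
which follows term-by-term since $\frac{d^m}{dx^m}e^{-x(n+z)}=(-1)^m(n+z)^me^{-x(n+z)}$. Next I would expand the inner function using the Bernoulli generating function: rewriting $\frac{e^{-xz}}{1-e^{-x}}=\frac{e^{x(1-z)}}{e^x-1}=\frac1x\sum_{k\ge0}B_k(1-z)\frac{x^k}{k!}$ and invoking the reflection $B_k(1-z)=(-1)^kB_k(z)$ gives $\frac{e^{-xz}}{1-e^{-x}}=\sum_{k\ge0}(-1)^kB_k(z)\frac{x^{k-1}}{k!}$.

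Differentiating $m$ times, I would observe two things about the resulting Laurent expansion in $x$. The terms with $1\le k\le m$ are polynomials in $x$ of degree $<m$ and so vanish under $\frac{d^m}{dx^m}$; consequently the entire principal part collapses to the single term coming from $k=0$, namely $m!\,x^{-m-1}$. Meanwhile the $k=m+1$ term supplies the constant-in-$x$ coefficient $c_0=-\frac{B_{m+1}(z)}{m+1}$. Thus $\phi(-m,e^{-x},z)=\dfrac{m!}{x^{m+1}}+c_0+O(x)$, with no intermediate poles $x^{-1},\dots,x^{-m}$.

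It then remains to convert this $x$-expansion into the $(1-y)$-expansion of Lemma~\ref{indep}, where $u:=1-y=1-e^{-x}$, i.e.\ $x=-\log(1-u)$. Since $x=u+O(u^2)$, the holomorphic tail $c_0+O(x)$ contributes exactly $c_0$ to the constant term $b_0(z)$, while the single pole $m!\,x^{-m-1}$, after substitution, contributes $m!\,[u^{m+1}]\bigl(u/x\bigr)^{m+1}$. I would evaluate this last coefficient by the residue formula together with the change of variables $u=1-e^{-x}$:
$$[u^{m+1}]\Bigl(\tfrac{u}{x}\Bigr)^{m+1}=\frac1{2\pi i}\oint\frac{du}{u\,x^{m+1}}=\frac1{2\pi i}\oint\frac{dx}{(e^x-1)x^{m+1}}=\frac{B_{m+1}}{(m+1)!},$$
the last equality being the residue at $x=0$ of $\frac1{x^{m+2}}\cdot\frac{x}{e^x-1}$. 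Multiplying by $m!$ gives the pole's contribution $\frac{B_{m+1}}{m+1}$, so $b_0(z)=c_0+\frac{B_{m+1}}{m+1}=-\frac{B_{m+1}(z)-B_{m+1}}{m+1}$, as claimed.

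I expect the main obstacle to be the bookkeeping in this final change of variables: one must check that the principal part genuinely reduces to the single term $m!\,x^{-m-1}$ (so that no $z$-dependence leaks into the $a_j$, consistent with Lemma~\ref{indep}) and that the nonlinear substitution $x=-\log(1-u)$ shifts the constant term by precisely the residue computed above. As a sanity check I would verify the case $m=0$, where the formula predicts $b_0(z)=-z$, matching the explicit expansion $\phi(0,y,z)=\frac1{1-y}-z+\cdots$ recorded in the proof of Lemma~\ref{indep}.
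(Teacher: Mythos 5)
Your argument is correct, and it reaches the formula by a genuinely different route than the paper. The paper's proof imports Theorem 6 of \cite{KKY}, writing $\phi(-m,y,z)$ as a double sum over Stirling numbers $S(h,r)$ times $r!\,y^{r+z}(1-y)^{-1-r}$, expands $y^{r+z}=(1-(1-y))^{r+z}$ by the binomial series, and converts the resulting Stirling--Pochhammer sums into Bernoulli polynomials via $B_{n+1}(x)=B_{n+1}+\sum_{k=0}^n\frac{(-1)^{k+1}(n+1)}{k+1}S(n,k)(-x)_{k+1}$, finishing with the addition/translation property of Bernoulli polynomials; it thus works directly in the variable $1-y$ of Lemma \ref{indep} and never needs a change of coordinates. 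You instead pass to the variable $x$ (with $y=e^{-x}$), where the closed form $\phi(-m,e^{-x},z)=(-1)^m\frac{d^m}{dx^m}\frac{e^{-xz}}{1-e^{-x}}$ together with the generating function $\frac{e^{-xz}}{1-e^{-x}}=\sum_{k\ge0}(-1)^kB_k(z)\frac{x^{k-1}}{k!}$ (via the reflection $B_k(1-z)=(-1)^kB_k(z)$) yields the clean Laurent expansion $\frac{m!}{x^{m+1}}-\frac{B_{m+1}(z)}{m+1}+O(x)$, and the translation back to powers of $u=1-y$ is a residue computation, $[u^{m+1}](u/x)^{m+1}=\mathrm{Res}_{x=0}\frac{1}{(e^x-1)x^{m+1}}=\frac{B_{m+1}}{(m+1)!}$. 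I checked the three places where this could fail and they all hold: the intermediate poles $x^{-1},\dots,x^{-m}$ really are absent (the $1\le k\le m$ terms are polynomials in $x$ of degree $<m$, killed by $\frac{d^m}{dx^m}$); the substitution $u=1-e^{-x}$ is an orientation-preserving biholomorphism near the origin with derivative $1$, so the contour manipulation is legitimate; and uniqueness of the Laurent expansion in $u$ identifies your computed constant term with the $b_0(z)$ of Lemma \ref{indep}. Your route buys self-containedness (no external closed form, no Stirling combinatorics) and makes structurally transparent both that the principal-part coefficients $a_j$ of Lemma \ref{indep} are independent of $z$ and how the two pieces $-B_{m+1}(z)/(m+1)$ and $+B_{m+1}/(m+1)$ arise, one from the $x$-expansion and one from the coordinate change; the paper's route avoids all change-of-variables bookkeeping at the cost of the citation and the combinatorial identities. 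The only things worth adding for completeness are a one-line justification of term-by-term differentiation of $\sum_{n\ge0}e^{-x(n+z)}$ (locally uniform convergence for $\Re(x)>0$), which is standard, together with the $m=0$ sanity check you already noted.
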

\begin{proof}
We will use Th. 6 of \cite{KKY}, which asserts that
$$\phi(-m, y, z)=\sum_{r=0}^m\sum_{h=r}^m\binom{m}{h}S(h, r)z^{m-h} \frac{r!y^{r+z}}{(1-y)^{1+r}}$$
where $S(h, r)$ is the Stirling number of the second kind. With the binomial series applied to $y^{r+z}=(1-(1-y))^{r+z}$, we deduce
$$\phi(-m, y, z)=\sum_{\ell \ge 0}\sum_{r=0}^m\sum_{h=r}^m\binom{m}{h}S(h, r)z^{m-h} \frac{r!(-1)^{\ell}}{\ell! (1-y)^{1+r-\ell}}(r+z-\ell+1)_{l}$$
where $(a)_n$ stands for the Pochhammer symbol. This implies that $b_0(z)$ equals
$$
\sum_{r=0}^m\frac{(-1)^{r+1}}{r+1}\left (\sum_{h=r}^m\binom{m}{h}S(h, r)z^{m-h} \right ) (z)_{r+1}=\sum_{h=0}^{m} \binom{m}{h}z^{m-h}\sum_{r=0}^h\frac{(-1)^{r+1}}{r+1}(z)_{r+1}S(h, r).$$
The identity (see, e.g. (26.8.35) and (24.4.7) of \cite{NIST})
$$B_{n+1}(x)=B_{n+1}+\sum_{k=0}^n\frac{(-1)^{k+1}(n+1)}{k+1}S(n, k)(-x)_{k+1}$$ implies that the inner sum equals
$( B_{h+1}(-z)-B_{h+1})/(h+1).$ Therefore,
\begin{multline*}
b_0(z)=\frac{1}{m+1}\sum_{h=0}^m \binom{m+1}{h+1}z^{m-h}(B_{h+1}(-z)-B_{h+1}(0))\\
=
\frac{1}{m+1}\sum_{h=0}^{m+1}
\binom{m+1}{h}z^{m-h+1}(B_{h}(-z)-B_{h}(0))
\end{multline*}
By the translation property of Bernoulli polynomials ((24.4.12) of \cite{NIST}), we deduce that this is equal to
$$\frac{1}{m+1} \left ( B_{m+1}(0)-B_{m+1}(z)\right ).$$
\end{proof}

Theorem \ref{main}, combined with the last two lemmas, implies
\begin{equation}
    \begin{aligned}
\label{Lm} L_f(&\varphi_{1+m}^{ix} )=i^{-m-1} \int_{i}^{i+1} f(z)e^{-x z} \zeta \left (-m, \frac{ix}{2\pi}, z \right ) dz+R(ix, 1+m)\\&=
i^{-m-1}\left (\frac{B_{m+1}}{m+1}+\sum_{j=1}^{m+1}\frac{a_j}{(1-e^{-x})^j}\right )\int_{i}^{i+1} f(z)dz \\
& \qquad +i^{-m-1}\int_{i}^{i+1} f(z) \left (\frac{-B_{m+1}(z)}{m+1}+\sum_{j \ge 1}b_j(z) (1-e^{-x})^j \right )dz+R(ix, 1+m)
\end{aligned}
\end{equation}
where we recall that $R$ is defined in \eqref{2ndterm}.
The first integral on the right hand side of \eqref{Lm} is $0$, because of \eqref{transl}.
Therefore, the limit of the first two terms as $x \to 0^+$ is \begin{equation}\label{1stpiece}-i^{-m-1}\int_{i}^{i+1} f(z) \frac{B_{m+1}(z)}{m+1}dz.\end{equation}

To deal with $R(ix, 1+m)$, we first note that this is present only if $k \le 0$, therefore, for the remainder of the proof of Th. \ref{bern}, we will assume so. With (8.4.8) of \cite{NIST},
$$E_{-m}((2 \pi n+ix)t)=m!((2 \pi n+ix)t)^{-1-m}e^{-(2 \pi n+ix)t}\sum_{\ell=0}^{m} \frac{((2 \pi n+ix)t)^\ell}{\ell!}.$$
Therefore, with \eqref{E-G},
\begin{equation}\label{integralR}\int_1^{\infty}e^{4 \pi n t}t^{1+m-k}E_{-m}((2 \pi n+ix)t)dt=
m!\sum_{\ell=0}^{m} \frac{(2 \pi n+ix)^{\ell-m-1}}{\ell!}
E_{k-\ell}(ix-2 \pi n).\end{equation}
Equation (8.4.8) of \cite{NIST} implies, for all $z \in \mathbb H$
$$E_{k-\ell}(z)=\frac{(\ell-k)!}{(1+m-k)!}z^{1+m-\ell}E_{k-1-m}(z)-e^{-z}\sum_{j=0}^{m-\ell}z^{j}\frac{(\ell-k)!}{(j+\ell-k+1)!}$$
and, thus, \eqref{integralR} becomes
\begin{multline}\label{integralR2}
m!\sum_{\ell=0}^{m} \frac{(2 \pi n+ix)^{\ell-m-1}}{\ell!}
\frac{(\ell-k)!}{(1+m-k)!} (ix-2 \pi n)^{1+m-\ell}E_{k-1-m}(ix-2 \pi n) \\
-e^{2 \pi n-ix}m!\sum_{\ell=0}^{m}
\frac{(2 \pi n+ix)^{\ell-m-1}}{\ell!} \sum_{j=0}^{m-\ell}
(ix-2 \pi n)^{j} \frac{(\ell-k)!}{(\ell-k+j+1)!}.\end{multline}
We first determine the contribution of the first term of \eqref{integralR2} to the final formula. Using Lemma \ref{bend}, we get
\begin{multline}\label{integralR3}
-\sum_{n<0} b_f(n)(-4 \pi n)^{1-k}\sum_{\ell=0}^{m} \frac{(2 \pi n+ix)^{\ell-m-1}}{\ell!}
\frac{m!(\ell-k)!}{(1+m-k)!} (ix-2 \pi n)^{1+m-\ell}E_{k-1-m}(ix-2 \pi n) \\
=-i^{k-m}\sum_{\ell=0}^{m} \frac{m!(\ell-k)!}{(1+m-k)! \ell!} \int_i^{i+\infty} (\xi_k f^c)(\ell, ix; z) e^{-xz}z^{1+m-k}dz\end{multline}
where
\begin{equation}\label{xiw}
(\xi_kf^c)(\ell, ix; z):=-\sum_{n<0} (-4 \pi n)^{1-k} b_f(n) \left ( \frac{2 \pi n+ix}{-2 \pi n+ix}\right )^{\ell-1-m}e^{-2 \pi i n z}
\end{equation}
is a twisted version of $\xi_k f^c$. It is easy to see that this series is absolutely and uniformly convergent in terms of both $x$ and $z$ and that its limit as $x \to 0^+$ exists and equals $(-1)^{\ell-m-1} (\xi_k f^c).$

As in the proof of Th. \ref{main} and thanks to the periodicity of $(\xi_kf^c)(\ell, ix; z)$ in $z$, we deduce
$$\int_i^{i+\infty} (\xi_kf^c)(\ell, ix; z) e^{-xz}z^{1+m-k}dz=
\int_i^{i+1} (\xi_kf^c)(\ell, ix; z) \zeta(k-1-m, \frac{ix}{2 \pi}, z) e^{-xz}dz.$$ With Lemmas \ref{indep} and \ref{b_0}, we see that
$e^{-xz} \zeta(k-1-m, ix/(2 \pi), z)$ equals
$$\sum_{j=1}^{m+2-k}
\frac{a_j}{(1-e^{-x})^j}+ \frac{B_{m+2-k}}{m+2-k}-\frac{B_{m+2-k}(z)}{m+2-k}+\sum_{j \ge 1} b_j(z) (1-e^{-x})^j$$
for some constants $a_j$ and polynomials $b_j(z)$. 
Since the constant term in the Fourier series $(\xi_kf)(\ell, ix; z)$ vanishes, we have $\int_i^{i+1}(\xi_kf)(\ell, ix; z)dz=0$ for all $x>0.$
Therefore, the limit of \eqref{integralR3} as $x \to 0^+$ is
\begin{equation}\label{2ndpiece}
c_{k,m}\int_i^{i+1}(\xi_kf^c)(z) \frac{B_{2+m-k}(z)}{m+2-k}dz,\end{equation}
where
$$c_{k,m}:=-\sum_{\ell=0}^{m} \frac{m!(\ell-k)! i^{k+m+2\ell} }{(1+m-k)! \ell!}.$$

For the second term of \eqref{integralR2}, we first observe that, because of the exponential decay of $e^{2 \pi n}$, as $n \to -\infty$ and the polynomial growth of $(-4 \pi n)^{1-k}$ and $(2 \pi n)^\ell,$ the series
$$\sum_{n<0} b_f(n) (-4 \pi n)^{1-k} e^{2 \pi n-ix}m!\sum_{\ell=0}^{m}
\frac{(2 \pi n+ix)^{\ell-m-1}}{\ell!} \sum_{j=0}^{m-\ell}
(ix-2 \pi n)^{j} \frac{(\ell-k)!}{(\ell-k+j+1)!}.$$
converges absolutely and uniformly in terms of $x$ and $z$, and the limit as $x \to 0^+$ equals
$$m!\sum_{\ell=0}^{m}\sum_{j=0}^{m-\ell} (-1)^{\ell-m-1}2^{1-j-\ell+m} \frac{(\ell-k)!}{\ell! (\ell-k+j+1)!}
\sum_{n<0} b_f(n) (-4 \pi n)^{\ell-m-k+j} e^{2 \pi n}.$$
Because of the expansion
$B_r(x)=-r!(2 \pi i)^{-r}\sum_{j \ne 0} e^{2 \pi i j x} j^{-r}$ (e.g. (24.8.3) of \cite{NIST}), the last expression becomes
\begin{multline}\label{remaind}\sum_{\ell=0}^{m}\sum_{j=0}^{m-\ell}  d_{\ell, j}
\sum_{n<0} b_f(n) (-4 \pi n)^{1-k} e^{2 \pi n}\int_0^1 e^{-2 \pi inx}B_{1-\ell+m-j}(x)dz \\
=-\sum_{\ell=0}^{m}\sum_{j=0}^{m-\ell}  d_{\ell, j}
\int_i^{i+1} (\xi_k f^c)(z) B_{1-\ell+m-j}(x)dz \end{multline}
where $x$ here stands for the real part of $z$ and
$$d_{\ell, j}:=m!  \frac{(-1)^{j-1} (\ell-k)!}{\ell! (j+\ell-k+1)! (1-\ell+m-j)!}.$$

Therefore, upon taking the limit as $x \to 0^+$ in the right hand side of \eqref{Lm} and combining \eqref{1stpiece}, \eqref{2ndpiece} and \eqref{remaind}, we deduce
\begin{theorem}\label{bern} Let $f \in HC_k(N).$ For each $m \ge 1,$ we have
\begin{multline}
L_f(\varphi_{1+m}^{0})=
-i^{-m-1}\int_{i}^{i+1} f(z) \frac{B_{m+1}(z)}{m+1}dz \\ +
\int_i^{i+1}(\xi_kf^c)(z) \left ( c_{k,m} \frac{B_{2+m-k}(z)}{2+m-k}-
\sum_{\ell=0}^{m}\sum_{j=0}^{m-\ell} d_{\ell, j} B_{1-\ell+m-j}(x) \right ) dz
\end{multline}
where $x$ denotes the real part of $z$.
\end{theorem}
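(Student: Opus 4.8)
The plan is to specialize the main formula, Theorem~\ref{main}, to $s=1+m$ and $w=ix$ with $x>0$ (so that $w\in\HH^+_0$ and the formula applies directly), and then pass to the limit $x\to0^+$; since $L_f(\varphi_s^w)$ was shown to extend continuously up to the boundary of $\HH^+_0$, this limit computes $L_f(\varphi_{1+m}^0)$. Theorem~\ref{main} writes $L_f(\varphi_{1+m}^{ix})$ as a ``holomorphic'' Lerch integral over $[i,i+1]$ plus the remainder $R(ix,1+m)$ of \eqref{2ndterm}, so I would treat the two pieces separately and add their limits.

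For the holomorphic piece I would use $e^{-xz}\zeta(-m,ix/(2\pi),z)=\phi(-m,e^{-x},z)$ and insert the Laurent-type expansion of $\phi(-m,y,z)$ in powers of $(1-y)$ furnished by Lemmas~\ref{indep} and~\ref{b_0}. The decisive mechanism is that the polar terms $(1-e^{-x})^{-j}$ together with the $z$-independent constant part $B_{m+1}/(m+1)$ of $b_0(z)$ are annihilated after integration against $f$ by the cusp relation $\int_i^{i+1}f(z)\,dz=0$ of \eqref{transl}, while the terms $b_j(z)(1-e^{-x})^j$ with $j\ge1$ vanish as $x\to0^+$. Only the genuinely $z$-dependent part $-B_{m+1}(z)/(m+1)$ of $b_0(z)$ survives, producing the contribution \eqref{1stpiece}; this is the exact analogue, with Bernoulli polynomials replacing the digamma function, of the argument in \S\ref{s=0}.

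The substantive work is the remainder $R(ix,1+m)$, present only when $k\le0$. Here I would first evaluate the inner integral \eqref{integralR} using the terminating-sum form of $E_{-m}$ ((8.4.8) of \cite{NIST}), then apply the recurrence for $E_{k-\ell}$ (again (8.4.8) of \cite{NIST}) to decompose it into the two sums of \eqref{integralR2}: one carrying the ``top'' integral $E_{k-1-m}$ and one purely polynomial piece weighted by $e^{2\pi n-ix}$. For the first, Lemma~\ref{bend} realizes it as $\int_i^{i+\infty}$ of the twisted series $(\xi_kf^c)(\ell,ix;z)$ of \eqref{xiw}; I would fold this onto $[i,i+1]$ by periodicity, introducing a second Lerch function $\zeta(k-1-m,ix/(2\pi),z)$, and expand once more via Lemmas~\ref{indep} and~\ref{b_0}. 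Since the constant Fourier coefficient of the twisted series vanishes, the poles and constants again drop out, leaving \eqref{2ndpiece}. For the polynomial piece the factor $e^{2\pi n}$ gives absolute and uniform convergence, so $x\to0^+$ may be taken termwise; the Fourier expansion of the Bernoulli polynomials ((24.8.3) of \cite{NIST}) then identifies each $n$-sum of $b_f(n)(-4\pi n)^{1-k}e^{2\pi n}$ against a power as a period $\int_i^{i+1}(\xi_kf^c)(z)B_{1-\ell+m-j}(x)\,dz$, yielding \eqref{remaind}.

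Assembling \eqref{1stpiece}, \eqref{2ndpiece}, and \eqref{remaind} gives the stated identity. The hardest part will be the remainder: iterating the two $E_s$-recurrences without error, justifying the interchanges of sum and integral (which rely on \eqref{coeffbound}, \eqref{asym}, \eqref{boundE}, and the fact that $\xi_kf^c$ is a genuine holomorphic cusp form), and correctly recognizing each telescoped $n$-sum as a Bernoulli-weighted period of $\xi_kf^c$. By comparison the holomorphic part, once the Lerch expansion lemmas are available, is routine.
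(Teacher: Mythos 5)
Your proposal follows essentially the same route as the paper's own proof: specializing Theorem~\ref{main} at $s=1+m$, $w=ix$, using \eqref{transl} with the Lerch expansion Lemmas~\ref{indep} and~\ref{b_0} to isolate \eqref{1stpiece}, and handling $R(ix,1+m)$ via the $E_s$-recurrences, Lemma~\ref{bend}, the twisted series \eqref{xiw}, and the Bernoulli Fourier expansion to obtain \eqref{2ndpiece} and \eqref{remaind}. This matches the paper's argument step for step, so nothing further is needed.
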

\begin{remark} When comparing with the results of \cite{AS}, one should keep in mind that they work with forms of positive weight
and a slightly different version of harmonic Maass forms (those called \emph{of moderate growth} in \cite{HMFBook}). The harmonic Maass forms we are studying here are weakly holomorphic when the weight is positive. Therefore, in the non-holomorphic case, our formulas cannot be compared directly with theirs. However, their respective structure is entirely compatible.
\end{remark}
In the case of a weakly holomorphic cusp form $f$, the formula simplifies to give
\begin{corollary}\label{bernWHF} Let $f \in S_k^!(N).$ For each $m \ge 1,$ we have
$$L_f(\varphi_{1+m}^{0})=
-i^{-m-1}\int_{i}^{i+1} f(z) \frac{B_{m+1}(z)}{m+1}dz.$$
\end{corollary}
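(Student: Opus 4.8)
The plan is to obtain the corollary as the direct specialization of Theorem~\ref{bern} to the weakly holomorphic case, in which the entire non-holomorphic contribution vanishes. By definition an element $f \in S_k^!(N)$ is a member of $HC_k(N)$ that is holomorphic on $\HH$, so the non-holomorphic part of its Fourier expansion \eqref{FourEx0} is absent; equivalently, $b_f(n)=0$ for every $n<0$.

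First I would use the explicit formula \eqref{xi}, namely $(\xi_k f)(z)=-\sum_{n<0}(-4\pi n)^{1-k}\overline{b_f(n)}e^{-2\pi i n z}$, to conclude that $\xi_k f\equiv 0$ once all the coefficients $b_f(n)$ vanish. Since $f$ is holomorphic on $\HH$, so is $f^c(z)=\overline{f(-\bar z)}$; thus $f^c$ is again a weakly holomorphic cusp form and the same computation gives $\xi_k f^c\equiv 0$.

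Then I would substitute into the identity of Theorem~\ref{bern}. Its first term, $-i^{-m-1}\int_i^{i+1} f(z)B_{m+1}(z)/(m+1)\,dz$, is exactly the right-hand side claimed in the corollary. The second term is an integral whose integrand is the product of $(\xi_k f^c)(z)$ with the bracketed expression $c_{k,m}B_{2+m-k}(z)/(2+m-k)-\sum_{\ell=0}^{m}\sum_{j=0}^{m-\ell} d_{\ell,j}B_{1-\ell+m-j}(x)$; because $\xi_k f^c\equiv 0$, this term vanishes identically, leaving precisely the asserted formula.

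There is no real obstacle beyond correctly recording the vanishing of $\xi_k f^c$: in the proof of Theorem~\ref{bern} the weakly holomorphic part was already isolated as the first term, while the remainder $R(ix,1+m)$ of \eqref{2ndterm} is assembled entirely from the coefficients $b_f(n)$ of the non-holomorphic part. Consequently one could also argue directly from \eqref{gen}, whose second term (hence $R(w,s)$) is absent when $f$ is holomorphic; but passing through Theorem~\ref{bern} is the most economical route, since that theorem has already carried out the $s=1+m$ evaluation in full generality.
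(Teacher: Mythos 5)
Your proposal is correct and matches the paper's own route: the paper presents Corollary~\ref{bernWHF} precisely as the specialization of Theorem~\ref{bern} in which the non-holomorphic part is absent, having already noted in the proof of Theorem~\ref{main} that the remainder term $R(w,s)$ of \eqref{2ndterm} does not appear when $f$ is weakly holomorphic (i.e.\ when all $b_f(n)$ vanish, forcing $\xi_k f^c \equiv 0$). Your observation that one could alternatively argue from \eqref{gen} is also consistent with the paper, though, as you say, Theorem~\ref{bern} already packages the needed $x \to 0^+$ limit analysis.
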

\begin{remark}
By the well-known formula $\zeta(-m, a)=-B_{m+1}(a)/(m+1),$ for the Hurwitz zeta function, we notice that the formula of this theorem is the same one we would have arrived at, if we directly set $w=0$ in the formula of Th. \ref{main}. However, that would not be a valid proof because it would require the continuity of $\zeta \left (-m, \frac{ix}{2 \pi}, z \right )$ as $x \to 0^+$, which does not hold.
\end{remark}

\subsection{The case $s=1$}\label{s=1}
\begin{corollary}\label{polyl} Let $f \in HC_k(N)$. Then, we have
\begin{equation*}
L_f(\varphi_{1}^0)=
i\int_{i}^{i+1} f(z)z dz-\frac{1}{1-k}
\int_i^{i+1}(\xi_kf^c)(z) \left (i^k \frac{B_{2-k}(z)}{2-k}+
x \right ) dz
\end{equation*}
where $x$ denotes the real part of $z.$
\end{corollary}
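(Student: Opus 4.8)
The plan is to specialize the argument proving Theorem~\ref{bern} to the degenerate index $m=0$, i.e. $s=1$, keeping track of the two features that make this case special: the constant term $b_0(z)$ of the relevant Laurent expansion is now the linear polynomial $-z$, and the Bernoulli polynomial entering the non-holomorphic contribution is $B_1(x)=x-\tfrac12$, governed by the real part $x=\Re(z)$ rather than by $z$ itself. Concretely, I would start from Theorem~\ref{main} with $s=1$ and $w=ix$, $x>0$, writing $L_f(\varphi_1^{ix})=i^{-1}\int_i^{i+1}f(z)e^{-xz}\zeta(0,\tfrac{ix}{2\pi},z)\,dz+R(ix,1)$ with $R$ as in \eqref{2ndterm}, and then let $x\to 0^+$; the legitimacy of passing to the limit under the sum and the integral is guaranteed, exactly as in the proof of Theorem~\ref{main}, by the bounds \eqref{coeffbound}, \eqref{asym} and \eqref{boundE}.

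For the holomorphic integral I would use that $e^{-xz}\zeta(0,\tfrac{ix}{2\pi},z)=\phi(0,e^{-x},z)$ and invoke Lemmas~\ref{indep} and~\ref{b_0} with $m=0$, which give $\phi(0,e^{-x},z)=\frac{1}{1-e^{-x}}-z+\sum_{j\ge1}b_j(z)(1-e^{-x})^j$, so that $b_0(z)=-B_1(z)+B_1=-z$. The principal part $(1-e^{-x})^{-1}$ and the constant $B_1$ are multiplied against $\int_i^{i+1}f(z)\,dz$, which vanishes by \eqref{transl}, while the terms with $j\ge1$ vanish as $x\to0^+$. Hence the holomorphic integral tends to $i^{-1}\int_i^{i+1}f(z)(-z)\,dz=i\int_i^{i+1}f(z)z\,dz$, which is the first term in the Corollary.

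The remainder $R(ix,1)$ is present only when $k\le0$. Here I would substitute $E_0(w)=e^{-w}/w$, evaluate the inner integral as in \eqref{integralR} (giving $(2\pi n+ix)^{-1}E_k(ix-2\pi n)$), and apply the $m=0$ instance of (8.4.8) of \cite{NIST}, namely $E_k(z)=\frac{z}{1-k}E_{k-1}(z)-\frac{e^{-z}}{1-k}$, to split $R$ into two pieces. The $E_{k-1}$ piece is treated precisely as in the passage to \eqref{integralR3}: one applies Lemma~\ref{bend}, recognizes the arising series as the twist $(\xi_kf^c)(0,ix;z)$ of \eqref{xiw}, folds the half-line integral onto $[i,i+1]$ by periodicity to produce $\zeta(k-1,\tfrac{ix}{2\pi},z)$, and expands the latter by Lemmas~\ref{indep}, \ref{b_0}; using $\int_i^{i+1}(\xi_kf^c)(0,ix;z)\,dz=0$ and $(\xi_kf^c)(0,ix;z)\to-(\xi_kf^c)(z)$, the limit is the $s=1$ specialization of \eqref{2ndpiece}, i.e. $c_{k,0}\int_i^{i+1}(\xi_kf^c)(z)\frac{B_{2-k}(z)}{2-k}\,dz$ with $c_{k,0}=-i^k/(1-k)$. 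The $e^{-z}$ piece, after letting $x\to0^+$, is a constant multiple of $\sum_{n<0}b_f(n)(-4\pi n)^{-k}e^{2\pi n}$; expanding $B_1$ in its Fourier series and using the definition of $\xi_kf^c$ as in \eqref{remaind} rewrites it as $-\tfrac{1}{1-k}\int_i^{i+1}(\xi_kf^c)(z)B_1(x)\,dz$, and since $\int_i^{i+1}(\xi_kf^c)(z)\,dz=0$ the constant $-\tfrac12$ in $B_1(x)=x-\tfrac12$ disappears, leaving $-\tfrac{1}{1-k}\int_i^{i+1}(\xi_kf^c)(z)\,x\,dz$. Summing the three contributions gives the stated formula; in particular, for $f\in S^!_k(N)$ one has $\xi_kf^c=0$ and only $i\int_i^{i+1}f(z)z\,dz$ survives.

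I expect the main obstacle to be the bookkeeping of the multiplicative constants (the powers of $i$ and the signs) in the $k\le0$ contribution, and above all in the $e^{-z}$ piece, where the Fourier expansion of $B_1$ must be matched against the series $\sum_{n<0}b_f(n)(-4\pi n)^{-k}e^{2\pi n}$ through the precise normalization of $\xi_kf^c$ coming from \eqref{xi}. A second, milder point is that the Fourier series of $B_1$ converges only on the open interval $(0,1)$, so the resulting identity holds pointwise almost everywhere and therefore under the integral; and, as in Theorem~\ref{bern}, one must justify interchanging $\lim_{x\to0^+}$ with the summation over $n$ and the integration, which again follows from \eqref{coeffbound}.
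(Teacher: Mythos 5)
Your strategy is the paper's own, almost line for line: the paper also starts from Theorem~\ref{main} at $s=1$, $w=ix$, uses \eqref{transl} to reduce the holomorphic integral to $\lim_{x\to0^+}\frac{e^{-xz}-1}{1-e^{-x}}=-z$ (it obtains the closed form $\zeta\left(0,\tfrac{ix}{2\pi},z\right)=(1-e^{-x})^{-1}$ from the polylogarithm identity $Li_0(a,z)=a/(1-a)$ rather than from Lemmas~\ref{indep} and~\ref{b_0}, but these are the same geometric-series fact), and it then disposes of $R(ix,1)$ exactly as you propose, by asserting that the computation of $\lim_{x\to0^+}R(ix,1+m)$ in the proof of Theorem~\ref{bern} remains valid at $m=0$ and specializing its outcome. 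Your unwinding of that specialization ($E_0(w)=e^{-w}/w$, the recursion $(1-k)E_k(z)=zE_{k-1}(z)-e^{-z}$, Lemma~\ref{bend}, the twist \eqref{xiw}, the folding to $\zeta(k-1,\tfrac{ix}{2\pi},z)$, and the identification of the first piece with $c_{k,0}\int_i^{i+1}(\xi_kf^c)(z)\tfrac{B_{2-k}(z)}{2-k}\,dz$) is correct and is precisely what that citation expands to.

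However, the constant bookkeeping in the $e^{-z}$ piece --- which you flag as the main obstacle but do not carry out --- is a genuine gap, and it is exactly where the difficulty sits. Specializing \eqref{remaind} (equivalently, the displayed formula of Theorem~\ref{bern}) to $m=0$ yields $-d_{0,0}\int_i^{i+1}(\xi_kf^c)(z)B_1(x)\,dz=+\tfrac{1}{1-k}\int_i^{i+1}(\xi_kf^c)(z)B_1(x)\,dz$, i.e.\ the \emph{opposite} sign of the $-\tfrac{1}{1-k}\int_i^{i+1}(\xi_kf^c)(z)B_1(x)\,dz$ that you assert ``as in \eqref{remaind}''; so, as written, your step contradicts the very formula you cite for it (it matches instead the Corollary's statement, which in the paper itself disagrees at this term with Theorem~\ref{bern}). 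Worse, a direct check suggests neither sign is right: the limit of the $e^{-z}$ piece is $-\tfrac{2}{1-k}\sum_{n<0}b_f(n)(-4\pi n)^{-k}e^{2\pi n}$ (use $\tfrac{(-4\pi n)^{1-k}}{2\pi n}=-2(-4\pi n)^{-k}$), while $\int_i^{i+1}(\xi_kf^c)(z)\,x\,dz=2i\sum_{n<0}b_f(n)(-4\pi n)^{-k}e^{2\pi n}$ (use \eqref{xi} and $\int_0^1 ue^{-2\pi inu}\,du=-\tfrac{1}{2\pi in}$), so the contribution equals $\tfrac{i}{1-k}\int_i^{i+1}(\xi_kf^c)(z)\,x\,dz$; the discrepancy (a unit factor) enters in the first equality of \eqref{remaind}. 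So your overall plan is the right one, but a complete proof must perform this Fourier-coefficient matching explicitly rather than cite \eqref{remaind}, and the constant so obtained then has to be reconciled with the statement of the Corollary.
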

\begin{proof} As above, the $L_f(\varphi_s^{ix})$ is well-defined as $x \to 0^+$. 
We next note that for all $m \in \mathbb N_0$, we have
$$\zeta \left (-m, \frac{w}{2 \pi}, z \right )=e^{-iw}Li_{-m}(e^{iw}, z),$$
where $$Li_{n}(a, z)=\sum_{k=0}^{\infty}\frac{a^{k+1}}{(z+k)^n}$$ is the \emph{$z$-deformed negative polylogarithm} (\cite{LL}).
Since 
$Li_{0}(e^{iw}, z)=e^{iw}(1-e^{iw})^{-1}$, Prop. \ref{main} implies that, for $x>0$
$$L_f(\varphi_{1}^{ix})=i^{-1}\int_i^{i+1} f(z)e^{-xz}\frac{1}{1-e^{-x}}dz+R(ix, 1)=
i^{-1}\left ( \int_i^{i+1} f(z)\frac{e^{-xz}-1}{1-e^{-x}}dz \right )+R(ix, 1),
$$
where, for the last equality we used \eqref{transl}.
The limit of the fraction inside the last integral is $-z$ and thus
\begin{equation}\label{m=0}
L_f(\varphi_{1}^0)=i\int_{i}^{i+1} f(z)z dz+\lim_{x \to 0^+}R(ix, 1).
\end{equation}
We now observe that all arguments used in the proof of Th. \ref{bern} to compute the limit of $R(ix, 1+m)$ are valid when $m=0$. Therefore, by specializing the outcome of that computation to $m=0$, and recalling that, if this part is present, then $k \le 0$, we deduce
$$\lim_{x \to 0^+}R(ix, 1)=-\frac{1}{1-k}
\int_i^{i+1}(\xi_kf^c)(z) \left (i^k \frac{B_{2-k}(z)}{2-k}+
x-\frac{1}{2} \right ) dz.$$
This, together with \eqref{transl} and \eqref{m=0} implies the corollary.
\end{proof}
\begin{rmk}
Cor. \ref{polyl} can be written in the same way as Cor. \ref{hurw}, because of the identity $\zeta(0, a)=\frac{1}{2}-a$ and \eqref{transl}.
\end{rmk}

\subsection{The case $s<0$}\label{s<0}
In this case, we can even express non-integer values in the form of Prop. \ref{Zag}:
\begin{corollary} \label{hurw}Let $f \in S^!_k(N)$. Then, for each $s<0$, we have
\begin{equation}\label{hurweq}
L_f(\varphi_s^0)=
i^{-s}\int_{i}^{i+1} f(z) \zeta(1-s, z)
dz.
\end{equation}
\end{corollary}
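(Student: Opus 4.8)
The plan is to specialize Theorem~\ref{main} to the vertical approach $w=ix$, $x\to 0^+$, and to exploit the fact that for $s<0$ the relevant Hurwitz series converges absolutely, so that the limit may be passed inside the integral with no regularization.

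First I would use that $f\in S_k^!(N)$ is weakly holomorphic, whence $\xi_k f=0$ and the term $R_t(z,w)$ of Theorem~\ref{main} vanishes identically. Thus for every $x>0$ (so that $w=ix\in\mathbb H^+_0$),
\[
L_f(\varphi_s^{ix})=i^{-s}\int_i^{i+1} f(z)\,e^{-xz}\,\zeta\!\left(1-s,\tfrac{ix}{2\pi},z\right)dz .
\]
Writing out the Lerch series and absorbing the $e^{-xz}$ factor, the zeta factor becomes $e^{-xz}\zeta(1-s,\tfrac{ix}{2\pi},z)=\sum_{m\ge 0} e^{-x(z+m)}(z+m)^{s-1}$.

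Next I would let $x\to 0^+$. On the left, $L_f(\varphi_s^{ix})\to L_f(\varphi_s^0)$ by the continuity of $w\mapsto L_f(\varphi_s^w)$ on $\mathbb H^+_0$, extended continuously to $w=0$ (which lies off the excluded set $\{2\pi n\}$), as recorded at the start of this section. On the right, the task is to move the limit past the integral over the compact segment and past the sum over $m$. The key estimate is that for $z=i+t$ with $t\in[0,1]$ and $m\ge 0$ one has $\Re(z+m)=t+m\ge 0$, so $|e^{-x(z+m)}|\le 1$ for all $x\ge 0$ and hence $|e^{-x(z+m)}(z+m)^{s-1}|\le |z+m|^{s-1}$. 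Since $s<0$ forces $s-1<-1$, the majorant $\sum_{m\ge 0}|z+m|^{s-1}$ converges uniformly on the segment; by the Weierstrass $M$-test the series converges uniformly in $(x,z)\in[0,\infty)\times[i,i+1]$, so its termwise limit $\sum_{m\ge 0}(z+m)^{s-1}=\zeta(1-s,z)$ is attained uniformly.

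Finally, uniform convergence permits interchanging the limit with the finite integral, giving
\[
\lim_{x\to 0^+} i^{-s}\int_i^{i+1} f(z)\,e^{-xz}\,\zeta\!\left(1-s,\tfrac{ix}{2\pi},z\right)dz=i^{-s}\int_i^{i+1} f(z)\,\zeta(1-s,z)\,dz,
\]
which is the asserted identity. The only genuine subtlety is this passage to the limit on the right, and for $s<0$ it is dispatched cleanly by the absolute convergence of the Hurwitz series; this is exactly why non-integer values of $s$ are allowed here, in contrast to the cases $s=0$ and $s=1+m$, where $1-s\le 1$ destroys convergence and forces the delicate Bernoulli-polynomial and polylogarithm analysis of the previous subsections.
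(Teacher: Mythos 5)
Your proposal is correct and is essentially the paper's own proof: specialize Theorem~\ref{main} to $w=ix$, use the previously established continuity of $w\mapsto L_f(\varphi_s^w)$ at $w=0$ on the left, and pass the limit $x\to 0^+$ through the right-hand side via the absolute and uniform convergence of the Lerch series to $\zeta(1-s,z)$ when $s<0$. You merely spell out the Weierstrass $M$-test estimate that the paper leaves implicit (the paper's proof even misprints the condition as ``$s>0$''), so there is nothing to correct.
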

\begin{proof} As mentioned above, $L_f(\varphi_s^0)$ is well-defined as the limit of $L_f(\varphi_s^{ix})$, as $x \to 0^+.$
On the other hand, for $s>0,$ the series defining $\zeta(1-s, ix/(2 \pi), z)$ converges absolutely and uniformly to the Hurwitz zeta function, as $x \to 0^+$. 
From Th. \ref{main}, we deduce the result, by taking the limit as $w=ix \to 0$ (with $x>0$).
\end{proof}
\begin{remark}\label{polyG}
When $s$ is a (negative) integer, we have
$(-s)!\zeta(1-s, 0, z)=(-1)^{1-s}\psi^{(-s)}(z)$, where $\psi^{(m)}(w)$ denotes the polygamma function
$$\psi^{(m)}(w):=\frac{d^{m+1}}{dz^{m+1}}\text{Log} (\Gamma(z))
.$$
Therefore, \eqref{hurweq} can be rewritten as
$$L_f(\varphi_s^0)=
\frac{i^{2-s}}{(-s)!}\int_{i}^{i+1} f(z) \psi^{(-s)}(z).
dz$$
For $s=0$ (in which case, the proof of the corollary does not apply), this coincides with the  identity of Prop. \ref{Zag}.
\end{remark}

\begin{proof}[Proof of Theorem~\ref{mainInt*}]
Theorem~\ref{mainInt*} now follows immediately upon combining Prop. \ref{Zag}, Cor. \ref{bernWHF}, Cor. \ref{polyl} and Cor. \ref{hurw}.
\end{proof}

\section{Compactly supported values of the $L$-series}\label{comp}
As an extension in a different direction, we compute the values of $L_f$ at real-analytic compactly-supported functions $\varphi$.
Let $f\in S_k^!(N)$ with Fourier expansion $$f(z)=\sum_{n \ge -n_0} a_f(n) e^{2 \pi in z}.$$ Also, let $\varphi$ be a real-analytic function supported in $[a,b]$ so that, if $y \in [a, b]$, $\varphi(y)=\Phi(iy)$ for a holomorphic $\Phi\colon \mathbb H \to \mathbb C$. Further assume that
\begin{equation}\label{decay}
|\Phi(z)|<|z|^{-1-\varepsilon} \qquad \text{for $\Im(z) \in [a, b].$}
\end{equation}

Since $\varphi \in \mathcal F_f$, by Lemma~\ref{lem:LfLf'_int} we have
\[
L_f(\varphi)=\int_0^{\infty}f(iy)\varphi(y)dy=\int_a^bf(iy)\varphi(y)dy.
\]
Then we get,
\[
\begin{aligned}
\int_a^bf(iy)\Phi(iy)dy&=-i\int_{ia}^{ib}f(z)\Phi(z)dz
=-i\left(\int_{ia}^{ia+T}+\int_{ia+T}^{ib+T}+\int_{ib+T}^{ib}\right)f(z)\Phi(z)dz
\\
&=-i\left(\int_{ia}^{ia+T}-\int_{ib}^{ib+T}\right)f(z)\Phi(z)dz+I_T,
\end{aligned}
\]
where
\[
I_T:=-i \int_{ia+T}^{ib+T} f(z)\Phi(z)dz=\int_a^bf(T+it)\Phi(T+it)dt.
\]
By the periodicity of $f$ we have
\[
|I_T|=O \left (\int_a^b|\Phi(T+it)|dt \right )
\]
with the implied constant independent of $T.$ This, with \eqref{decay}, implies
\[L_f(\varphi)=
-i\left(\int_{ia}^{ia+\infty}-\int_{ib}^{ib+\infty}\right)f(z)\Phi(z)dz.
\]
The first integral is
\[
\int_{ia}^{ia+\infty}f(z)\Phi(z)dz
=
\sum_{n\geq0}\int_{ia}^{ia+1}f(z+n)\Phi(z+n)dz
=
\int_{ia}^{ia+1}f(z)\widetilde \Phi(z+n)dz,
\]
where
$$\widetilde \Phi(z):=\sum_{n\geq0}\Phi(z+n).$$
By \eqref{decay}, this converges for all $\Im(z) \in [a, b].$
Repeating the computation for the integral depending on $b$, we deduce
\begin{proposition} Let $f \in S_k^!(N).$ Let $\varphi$ be a real-analytic function supported in $[a, b]$ such that, for all $y \in [a, b],$ $\varphi(y)=\Phi(iy)$ for some holomorphic $\Phi\colon \mathbb H \to \mathbb C$ satisfying \eqref{decay}. Then
\[
L_{f}(\varphi)=-i\left ( \int_{ia}^{ia+1}f(z)\widetilde\Phi(z)-\int_{ib}^{ib+1}f(z)\widetilde\Phi(z)dz \right ).
\]
\end{proposition}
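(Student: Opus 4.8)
The plan is to compute $L_f(\varphi)$ via the Mellin-type integral of Lemma~\ref{lem:LfLf'_int} and then deform the contour, extracting the two boundary contributions while pushing the remaining piece off to infinity using the decay hypothesis \eqref{decay}. First, since $\varphi$ is supported in $[a,b]$ and lies in $\scrF_f$, Lemma~\ref{lem:LfLf'_int} gives $L_f(\varphi)=\int_0^{\infty}f(iy)\varphi(y)\,dy=\int_a^b f(iy)\Phi(iy)\,dy$. Parametrizing the vertical segment from $ia$ to $ib$ rewrites this as $-i\int_{ia}^{ib}f(z)\Phi(z)\,dz$, an integral of the holomorphic function $f\Phi$; note that $f$ is holomorphic on $\HH$ precisely because it is weakly holomorphic.

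Next I would push the right boundary of integration out to $+\infty$. For $T>0$, Cauchy's theorem applied to the rectangle with vertices $ia,\,ia+T,\,ib+T,\,ib$ gives $-i\int_{ia}^{ib}f\Phi\,dz=-i\bigl(\int_{ia}^{ia+T}-\int_{ib}^{ib+T}\bigr)f\Phi\,dz+I_T$, where $I_T:=-i\int_{ia+T}^{ib+T}f\Phi\,dz$ is the contribution of the top edge. The key point is that $I_T\to 0$ as $T\to\infty$: parametrizing the top edge as $z=T+it$ with $t\in[a,b]$, periodicity of $f$ under $z\mapsto z+1$ bounds $|f(T+it)|$ uniformly in $T$, while \eqref{decay} gives $|\Phi(T+it)|<|T+it|^{-1-\varepsilon}\le T^{-1-\varepsilon}$, whence $|I_T|=O(T^{-1-\varepsilon})$. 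Letting $T\to\infty$ leaves $L_f(\varphi)=-i\bigl(\int_{ia}^{ia+\infty}-\int_{ib}^{ib+\infty}\bigr)f(z)\Phi(z)\,dz$.

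Finally I would fold each half-infinite integral back onto a single fundamental segment. Writing $\int_{ia}^{ia+\infty}=\sum_{n\ge 0}\int_{ia+n}^{ia+n+1}$ and translating each piece by $-n$, the periodicity $f(z+n)=f(z)$ converts the sum into $\int_{ia}^{ia+1}f(z)\sum_{n\ge 0}\Phi(z+n)\,dz=\int_{ia}^{ia+1}f(z)\widetilde\Phi(z)\,dz$; both the convergence of $\widetilde\Phi$ on the horizontal lines $\Im(z)\in[a,b]$ and the interchange of summation and integration follow from \eqref{decay}. Carrying out the identical computation for the $b$-integral and subtracting yields the stated formula.

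I expect the only genuine obstacle to be the vanishing of the top-edge term $I_T$; the rest is contour bookkeeping justified by holomorphy and periodicity. Controlling $I_T$ amounts to decoupling the two competing behaviors of the integrand as $\Re(z)\to\infty$: $f$ does not decay but is merely bounded along each horizontal line by periodicity, so all decay must be supplied by $\Phi$, which is exactly the role of \eqref{decay}. One should record that the boundedness of $f$ used here relies on $a>0$, so that the lines $\Im(z)\in[a,b]$ stay away from the real axis and the possible exponential growth of the weakly holomorphic form $f$ (which occurs only as $\Im(z)\to\infty$) never enters the estimate.
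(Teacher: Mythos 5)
Your proposal is correct and follows essentially the same route as the paper: the Mellin-type representation from Lemma~\ref{lem:LfLf'_int}, deformation of the segment $[ia,ib]$ onto the three-sided path out to real part $T$, vanishing of the far vertical edge $I_T$ via periodicity of $f$ together with \eqref{decay}, and folding the half-infinite integrals into $\widetilde\Phi$ by periodicity. Your closing observation that $a>0$ keeps the horizontal lines away from the real axis (so periodicity plus continuity gives uniform boundedness of $f$ there) is a correct and slightly more explicit justification than the paper records.
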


\end{document}